\documentclass{amsart}
\usepackage{multirow}
\usepackage{tabu}
\usepackage{adjustbox}
\usepackage{rotating}
\usepackage{enumerate}
\usepackage{amsmath}
\usepackage{mathrsfs}
\usepackage{booktabs,caption}
\usepackage[flushleft]{threeparttable}
\usepackage{amssymb}
\usepackage{mathtools}
\usepackage{color}
\usepackage{algorithm}
\usepackage{algorithmic}
\usepackage{cite,url}
\usepackage{latexsym}
\usepackage{booktabs}
\usepackage{bm}
\usepackage{graphicx}
\usepackage[flushleft]{threeparttable}
\usepackage{textcomp}
\usepackage{subfigure}
\usepackage[toc,page]{appendix}


\newcommand{\Rni}{\mathbb{R}^{n_{i}}}

\newcommand{\nfR}{\mathbb{R}}
\newcommand{\xpi}{x_i}

\newcommand{\xmi}{x_{-i}}
\newcommand{\allx}{\mathbf{x}}

\newcommand{\allu}{\mathbf{u}}
\newcommand{\fpi}{f_{i}}

\newcommand{\lip}{\left<}
\newcommand{\rip}{\right>}
\newcommand{\lvt}{\left[}
\newcommand{\rvt}{\right]}
\newcommand{\idl}{\mbox{Ideal}}
\newcommand{\qmod}{\mbox{Qmod}}
\newcommand{\ddd}{,\ldots,}

\DeclareMathOperator{\Rank}{rank}
\DeclareMathOperator{\dom}{dom}

\newcommand{\re}{\mathbb{R}}
\newcommand{\cpx}{\mathbb{C}}

\newcommand{\N}{\mathbb{N}}

\def\af{\alpha}

\newcommand{\st}{\mathit{s.t.}}
\newcommand{\reff}[1]{(\ref{#1})}

\newcommand{\lmd}{\lambda}

\newcommand{\dt}{\delta}

\newcommand{\mc}[1]{\mathcal{#1}}

\def\rank{\mbox{rank}}

\newcommand{\bdes}{\begin{description}}
\newcommand{\edes}{\end{description}}

\newcommand{\bal}{\begin{align}}
\newcommand{\eal}{\end{align}}

\newcommand{\bnum}{\begin{enumerate}}
\newcommand{\enum}{\end{enumerate}}

\newcommand{\bit}{\begin{itemize}}
\newcommand{\eit}{\end{itemize}}

\newcommand{\bea}{\begin{eqnarray}}
\newcommand{\eea}{\end{eqnarray}}
\newcommand{\be}{\begin{equation}}
\newcommand{\ee}{\end{equation}}

\newcommand{\baray}{\begin{array}}
\newcommand{\earay}{\end{array}}

\newcommand{\bsry}{\begin{subarray}}
\newcommand{\esry}{\end{subarray}}

\newcommand{\bca}{\begin{cases}}
\newcommand{\eca}{\end{cases}}

\newcommand{\bcen}{\begin{center}}
\newcommand{\ecen}{\end{center}}

\newcommand{\bbm}{\begin{bmatrix}}
\newcommand{\ebm}{\end{bmatrix}}

\newcommand{\btab}{\begin{tabular}}
\newcommand{\etab}{\end{tabular}}

\theoremstyle{remark}
\newtheorem*{remark}{Remark}
\theoremstyle{plain}
\newtheorem{thm}{Theorem}[section]{\bf}{\it}
{\bf}{\it}
\newtheorem{prop}[thm]{Proposition}{\bf}{\it}
\newtheorem{lem}[thm]{Lemma}{\bf}{\it}
{\bf}{\it}
{\bf}{\it}
\newtheorem{defi}[thm]{Definition}{\bf}{\it}
\newtheorem{alg}[thm]{Algorithm}{\bf}{\it}
\newtheorem{exm}[thm]{Example}{\bf}{\rm}
{\bf}{\it}
{\bf}{\it}


\setcounter{equation}{0}
\setcounter{subsection}{0}
\setcounter{thm}{0}
\numberwithin{equation}{section}

\begin{document}

\title[Convex GNEPs and Polynomial Optimization]
{Convex Generalized Nash Equilibrium Problems and Polynomial Optimization}

\author[Jiawang Nie]{Jiawang~Nie}
\address{Jiawang Nie, Department of Mathematics,
University of California San Diego,
9500 Gilman Drive, La Jolla, CA, USA, 92093.}
\email{njw@math.ucsd.edu}

\author[Xindong Tang]{Xindong~Tang}
\address{Xindong Tang, Department of Applied Mathematics,
The Hong Kong Polytechnic University,
Hung Hom, Kowloon, Hong Kong.}
\email{xindong.tang@polyu.edu.hk}

\date{}

\subjclass[2010]{90C33, 91A10, 90C22, 65K05}
\keywords{Generalized Nash Equilibrium Problem, Convex polynomials, Polynomial optimization, Moment-SOS relaxation,
Lagrange multiplier expression.}

\begin{abstract}
This paper studies convex Generalized Nash Equilibrium Problems (GNEPs)
that are given by polynomials.
We use rational and parametric expressions for Lagrange multipliers
to formulate efficient polynomial optimization for
computing Generalized Nash Equilibria (GNEs).
The Moment-SOS hierarchy of semidefinite relaxations
are used to solve the polynomial optimization.
Under some general assumptions, we prove the method can find a GNE
if there exists one, or detect nonexistence of GNEs.
Numerical experiments are presented to show the efficiency of the method.
\end{abstract}

\maketitle

\section{Introduction}
The Generalized Nash Equilibrium Problem (GNEP) is a kind of game to find strategies
for a group of players such that each player's objective function is optimized,
for given other players' strategies.
Suppose there are  $N$  players and the $i$th player's strategy is a vector
$x_{i}\in\mathbb{R}^{n_{i}}$ (the $n_i$-dimensional real Euclidean space).
We write that
\[x_i:=(x_{i,1},\ldots,x_{i,n_i}),\quad x:=(x_1,\ldots,x_N).\]
The total dimension of all strategies is $n := n_1+ \ldots + n_N.$
The main task of the GNEP is to find a tuple
$u = (u_1, \ldots, u_N)$ of strategies
such that each $u_i$ is a minimizer of the $i$th player's optimization
\be
\label{eq:GNEP}
\mbox{F}_i(u_{-i}): \,
\left\{ \begin{array}{cl}
\min\limits_{\xpi\in \Rni}  &  \fpi(u_1, \ldots, u_{i-1}, x_i, u_{i+1},\ldots, u_N) \\
\st & g_{i,j}(u_1, \ldots, u_{i-1}, x_i, u_{i+1},\ldots, u_N)  = 0 \,
        (j\in \mathcal{E}_i),\\
	& g_{i,j}(u_1, \ldots, u_{i-1}, x_i, u_{i+1},\ldots, u_N)  \geq 0 \,
         (j\in \mathcal{I}_i),
\end{array} \right.
\ee
where $u_{-i} := (u_1, \ldots, u_{i-1}, u_{i+1},\ldots, u_N)$,
the $f_i$ and $g_{i,j}$ are continuously differentiable functions in $x_i$,
and the $\mc{E}_i$, $\mc{I}_i$ are disjoint finite (possibly empty) labeling sets.
The point $u$ satisfying the above is called a Generalized Nash Equilibrium (GNE).
For notational convenience, when the $i$th player's strategy is considered,
we use $x_{-i}$ to denote the subvector of all players' strategies
except the $i$th one, i.e.,
\[
x_{-i} \, := \, (x_1, \ldots, x_{i-1}, x_{i+1}, \ldots, x_N),
\]
and write $x=(x_{i},x_{-i})$ accordingly.

This paper focuses on the Generalized Nash Equilibrium Problem of Polynomials (GNEPP),
i.e., all the functions $f_i$ and $g_{i,j}$ are polynomials in $x$.
For each $i=1,\ldots,N$, let $X_i$ be the point-to-set map such that
\be \label{eq:feaset}
X_i(\xmi) \, := \,
\left\{x_i \in \Rni \left|\begin{array}{l}
g_{i,j}(\xpi,\xmi) = 0 ,\, j\in \mathcal{E}_i,\\
g_{i,j}(\xpi,\xmi) \geq 0 ,\, j\in \mathcal{I}_i
\end{array}\right.\right\}.
\ee
The $X_i(\xmi)$ is the feasible strategy set of $\mbox{F}_i(\xmi)$.
The domain of $X_i$ is
\[\dom(X_i):=\{x_{-i}\in\re^{n-n_i}:X_i(\xmi)\ne\emptyset\}.\]
The tuple $x$ is said to be a feasible point of the GNEP if
$\xpi\in X_i(\xmi)$ for all $i$. Denote the set
\be
X :=
\left\{  x\in\re^n
\left| \begin{array}{l}
 g_{i,j}(\xpi,\xmi) = 0, \, j\in \mathcal{E}_i,\, i=1,\ldots, N,\\
 g_{i,j}(\xpi,\xmi) \geq 0, \, j\in \mathcal{I}_i,\, i=1,\ldots, N
\end{array} \right.
\right \} .
\ee
Then $x$ is a feasible point for the GNEP if and only if $x\in X.$

\begin{defi} \label{df:cgnep} \rm
The GNEP given by (\ref{eq:GNEP}) is called {\it convex}
\footnote{
In some literature, this is also called player-convex,
to distinguish from jointly-convex GNEPs; see \cite{dreves2012nonsmooth}.}
if for all
$i=1,\ldots, N$ and for all given $\xmi\in\dom(X_i)$,
the objective $f_i(\xpi,\xmi)$ is convex in $x_i$ on $X_i(\xmi)$,
all $g_{i,j}(\xpi,\xmi) \, (j\in\mc{E}_i)$
are affine linear in $\xpi$,
and all $g_{i,j}(\xpi,\xmi) \, (j\in\mc{I}_i)$ are concave in $\xpi$.
\end{defi}
For instance, consider the $2$-player GNEPP
\be
\label{eq:firstep}
\begin{array}{lllll}
    \min\limits_{x_{1} \in \re^3 }& \sum\limits_{j=1}^3(x_{1,j}-x_{2,j})^2 &\vline&
    \min\limits_{x_{2} \in \re^3 }& \sum\limits_{j=1}^3\Big((x_{2,j})^4-x_{2,j}\prod\limits_{k=1}^3x_{1,k}\Big)\\
    \st & x_2^Tx_1-1=0,&\vline& \st &\Vert x_1\Vert^2-\Vert x_2\Vert^2\ge0.\\
    & (x_{11}, x_{12}, x_{13}) \ge 0;  				 &\vline&
\end{array}
\ee
In the above, the $\Vert\cdot\Vert$ denotes the Euclidean norm.
For each $i$, the Hessian of $f_i$ with respect to $x_i$
is positive semidefinite for all $\xmi\in\dom(X_i)$.
All players have convex optimization problems, so this is a convex GNEP.
One can directly check that it has a unique GNE $u=(u_1,u_2)$ with
\[
u_1 =\left(\frac{\sqrt[3]{2}}{\sqrt{3}},\frac{\sqrt[3]{2}}{\sqrt{3}},
\frac{\sqrt[3]{2}}{\sqrt{3}}\right), \
u_2 =\left(\frac{1}{\sqrt[6]{108}},\frac{1}{\sqrt[6]{108}},
\frac{1}{\sqrt[6]{108}}\right).
\]

GNEPs originated from economics in
\cite{debreu1952social,arrow1954existence}.
Recently, it has been widely used in many areas, such as economics, transportation, telecommunications and pollution control.
Convex GNEPs often appear in applications.
We refer to \cite{Anselmi2017,ardagna2017generalized,breton2006game,Pang2008}
for recent work on applications of GNEPs.
Some application examples are shown in Section~\ref{sc:ne}.

For the classical Nash Equilibrium Problems (NEPs) of polynomials,
there exist semidefinite relaxation methods~\cite{AZgame,Nie2020nash}.
Convex GNEPs can be reformulated as variational inequality (VI)
or quasi-variational inequality (QVI) problems
\cite{Facchinei2010generalized,harker1991generalized,
nabetani2011parametrized,Pang2005,Han2012}.
The Karush-Kuhn-Tucker (KKT) system for all player's optimization
problems is considered in \cite{dreves2011solution}.
The penalty functions are used to solve convex GNEPs in
\cite{FacKan10,fukushima2011, Facchinei2011partial}.
Some methods using the Nikaido-Isoda function are given in \cite{dreves2012nonsmooth,vonHeusinger2009,vonHeusinger2009-2}.
The Lemke’s method is used to solve affine GNEPs \cite{Schiro2013}.
For general nonconvex GNEPs, we refer to
\cite{Ba2020,dreves2014new,facchinei2009generalized,kanzow2016,Nie2020gs}.
It is generally quite difficult to solve GNEPs, even if they are convex.
This is because the KKT system of a convex GNEP
may still be difficult to solve.
The set of GNEs may be nonconvex,
even for convex NEPs (see \cite{Nie2020nash}).
We refer to \cite{Facchinei2010, Facchinei2010book} for surveys on GNEPs.

\subsection*{Contributions}
This paper focuses on convex GNEPPs.
Under some constraint qualifications,
a feasible point is a GNE if and only if it satisfies the KKT conditions.
We introduce rational and parametric expressions for Lagrange multipliers and
formulate polynomial optimization for computing GNEs.
Our major results are:
\begin{itemize}

\item For GNEPPs, we introduce the rational expression
for Lagrange multipliers and study their properties.
We prove the existence of rational expressions
and give a sufficient and necessary condition for positivity of denominators.
Moreover, we give parametric expressions for Lagrange multipliers for several cases.
For all GNEPs, parametric expressions always exist.

\item Using rational and parametric expressions,
we formulate polynomial optimization
and propose an algorithm for computing GNEs.
Under some general assumptions, we prove that
the algorithm can compute a GNE if it exists, or detect nonexistence of GNEs.
This is the first numerical method that has these properties,
to the best of the authors' knowledge.

\item The Moment-SOS semidefinite relaxations are used
to solve polynomial optimization for finding and verifying GNEs.
Numerical experiments are presented to show the efficiency of the method.

\end{itemize}

The paper is organized as follows.
Some preliminaries about polynomial optimization are given in Section~\ref{sc:pre}.
We introduce rational expressions for Lagrange multipliers in Section~\ref{sc:rtnlmd}.
The parametric expressions for Lagrange multipliers are given in Section~\ref{sc:prt}.
We formulate polynomial optimization problems for computing GNEs and show how to
solve them using the Moment-SOS hierarchy in Section~\ref{sc:alg}.
Numerical experiments and applications are given in Section~\ref{sc:ne}.
Conclusions and some discussions are given in Section~\ref{sc:conc}.

\section{Preliminaries}
\label{sc:pre}

\subsection*{Notation}
The symbol $\mathbb N$ (resp., $\mathbb R$, $\mathbb C$) stands for the set of
nonnegative integers (resp., real numbers, complex numbers).
For a positive integer $k$, denote the set $[k] := \{1, \ldots, k\}$.
For a real number $t$, $\lceil t \rceil$ (resp., $\lfloor t \rfloor$)
denotes the smallest integer not smaller than $t$
(resp., the biggest integer not bigger than $t$).
We use $e_i$ to denote the vector such that the $i$th entry is
$1$ and all others are zeros.
By writing $A\succeq0$ (resp., $A\succ0$), we mean that the matrix $A$
is symmetric positive semidefinite (resp., positive definite).
For the $i$th player's strategy vector $x_i \in \re^{n_i}$,
the $x_{i,j}$ denotes the $j$th entry of $x_i$, for $j = 1, \ldots, n_i$.
When we write $(y,x_{-i})$, it means that the $i$th player's strategy is $y\in\Rni$,
while the vector of all other players' strategy is fixed to be $\xmi$.
Let $\nfR[x]$ denote the ring of polynomials
with real coefficients in $x$,
and $\nfR[x]_d$ denote its subset of polynomials whose degrees are not greater than $d$.
For the $i$th player's strategy vector $x_i$,
the notation $\re[x_i]$ and $\re[x_i]_d$
are defined in the same way.
For $i$th player's objective $f_i(x)$,
the notation $\nabla_{x_i}f_i$, $\nabla^2_{x_i}f_i$
respectively denote its gradient and Hessian with respect to $x_i$.

In the following, we use the letter $z$ to represent either
$x$, $x_i$ or $(x,\omega)$ for some new variables $\omega$,
for convenience of discussion.
Suppose $z := (z_1,\ldots,z_l)$.
For a polynomial $p(z)\in\re[z]$,
the $p=0$ means $p(z)$ is identically zero on $\re^l$.
We say the polynomial $p$ is nonzero if $p\ne0$.
Let $\af := (\af_1, \ldots, \af_l) \in \N^{l}$, and we denote
\[
z^\alpha := z_1^{\alpha_1} \cdots z_l^{\alpha_l}, \quad
|\alpha|:=\alpha_1+\ldots+\alpha_l.
\]
For an integer $d >0$, denote the monomial power set
\[
{\mathbb{N}}_d^l \, := \,
\{\alpha\in {\mathbb{N}}^l: \, \ |\alpha| \le d \}.
\]
We use $[z]_d$ to denote the vector of all monomials in $z$
whose degree is at most $d$, ordered in the graded alphabetical ordering.
For instance, if $z =(z_1, z_2)$, then
\[
[z]_3 = (1,  z_1, z_2, z_1^2, z_1z_2,
z_2^2, z_1^3, z_1^2z_2, z_1z_2^2, z_2^3).
\]
Throughout the paper, a property is said to hold {\it generically}
if it holds for all points in the space of input data
except a set of Lebesgue measure zero.

\subsection{Ideals and positive polynomials}
\label{ssc:poly}

Let $\mathbb{F} := \re\ \mbox{or}\ \cpx$. For a polynomial
$p \in\mathbb{F}[z]$ and subsets $I, J \subseteq \mathbb{F}[z]$,
define the product and Minkowski sum
\[
p \cdot I  :=\{ p  q: \, q \in I \}, \quad
I+J  := \{a+b: \, a \in I, b \in J  \}.
\]
The subset $I$ is an ideal if $p \cdot I \subseteq I$ for all $p\in\mathbb{F}[z]$
and $I+I \subseteq I$.
For a tuple of polynomials $q = (q_1, \ldots, q_m)$, the set
\[
\idl[q]:= q_1\cdot\mathbb{F}[z] + \ldots + q_m \cdot \mathbb{F}[z]
\]
is the ideal generated by $q$, which is the smallest ideal
containing each $q_i$.

We review basic concepts in polynomial optimization.
A polynomial $\sigma \in \re[z]$ is said to be a sum of squares (SOS)
if $\sigma = p_1^2+\ldots+p_k^2$ for some polynomials $p_i \in\nfR[z]$.
The set of all SOS polynomials in $z$ is denoted as $\Sigma[z]$.
For a degree $d$, we denote the truncation
\[
\Sigma[z]_d \, := \, \Sigma[z] \cap \nfR[z]_d.
\]
For a tuple $g=(g_1,\ldots,g_t)$ of polynomials in $z$,
its quadratic module is the set
\[
\qmod[g] \, := \,  \Sigma[z] +  g_1 \cdot \Sigma[z] + \ldots + g_t \cdot  \Sigma[z].
\]
Similarly, we denote the truncation of $\qmod[g]$
\[
\qmod[g]_{2d} \, := \, \Sigma[z]_{2d} + g_1\cdot \Sigma[z]_{2d-\deg(g_1)}
+\ldots+g_t\cdot\Sigma[z]_{2d-\deg(g_t)}.
\]
The tuple $g$ determines the basic closed semi-algebraic set
\begin{equation}
  \label{polyrep}
\mathcal{S}(g) \, := \,  \{z \in \nfR^l: g_1(z) \ge  0, \ldots, g_t(z) \ge  0  \}.
\end{equation}
For a tuple $h=(h_1,\ldots,h_s)$ of polynomials in $\re[z]$, its real zero set is
\[
\mc{Z}(h) := \{z \in\re^l: h_1(z)=\ldots=h_s(z)=0\}.
\]
The set $\idl[h]+\qmod[g]$ is said to be {\it archimedean}
if there exists $\rho \in \idl[h]+\qmod[g]$ such that the set $\mathcal{S}(\rho)$ is compact.
If $\idl[h]+\qmod[g]$ is archimedean, then
$\mathcal{Z}(h)\cap\mathcal{S}(g)$ must be compact.
Conversely, if $\mathcal{Z}(h)\cap\mathcal{S}(g)$ is compact, say,
$\mathcal{Z}(h)\cap\mathcal{S}(g)$ is contained in the ball $R -\|z\|^2 \ge 0$,
then $\idl[h]+\qmod[g,R -\|z\|^2]$ is archimedean
and $\mathcal{Z}(h)\cap\mathcal{S}(g) = \mathcal{Z}(h)\cap\mathcal{S}(g, R -\|z\|^2)$.
Clearly, if $f \in \idl[h]+\qmod[g]$, then
$f \ge 0$ on $\mathcal{Z}(h) \cap \mathcal{S}(g)$.
The reverse is not necessarily true.
However, when $\idl[h]+\qmod[g]$ is archimedean,
if $f > 0$ on $\mathcal{Z}(h)\cap\mathcal{S}(g)$, then $f \in \idl[h]+\qmod[g]$.
This conclusion is referenced as
Putinar's Positivstellensatz \cite{putinar1993positive}.
Interestingly, if $f \ge 0$ on $\mathcal{Z}(h)\cap\mathcal{S}(g)$,
we also have $f\in \idl[h]+\qmod[g]$,
under some standard optimality conditions \cite{nie2014optimality}.

\subsection{Localizing and moment matrices}
\label{ssc:locmat}

Let $\re^{ \N_{2d}^{l} }$ denote the space of all real vectors
that are labeled by $\af \in \N_{2d}^{l}$.
A vector $y \in \re^{ \N_{2d}^{l} }$ is labeled as
\[
y \,=\, (y_\af)_{ \af \in \N_{2d}^{l} }.
\]
Such $y$ is called a
{\it truncated multi-sequence} (tms) of degree $2d$.
For a polynomial $f  = \sum_{ \af \in \N^l_{2d} } f_\af z^\af \in  \re[z]_{2d}$,
define the operation
\be \label{<f,y>}
\langle f, y \rangle \, := \, {\sum}_{ \af \in \N^l_{2d} } f_\af y_\af.
\ee
The operation $\langle f, y \rangle$ is a bilinear function in $(f, y)$.
For a polynomial $q \in \re[z]$, with $\deg(q) \le 2d$,
and the integer $t = d - \lceil \deg(q)/2 \rceil$,
the outer product $q \cdot [z]_t ([z]_t)^T$
is a symmetric matrix polynomial in $z$, with length $\binom{n+t}{t}$.
We write the expansion as
\[
q \cdot [z]_t ([z]_t)^T \, = \, {\sum}_{ \af \in \N_{2d}^l }
z^\af  Q_\af,
\]
for some symmetric matrices $Q_\af$.
Then we define the matrix function
\be \label{df:Lf[y]}
L_{q}^{(d)}[y] \, := \, {\sum}_{ \af \in \N_{2d}^l } y_\af  Q_\af.
\ee
It is called the $d$th {\it localizing matrix} of $q$ generated by $y$.
For given $q$, the matrix $L_{q}^{(d)}[y]$ is linear in $y$.
Localizing and moment matrices are important for getting semidefinite
relaxations of solving polynomial optimization
\cite{Las01,nie2013certifying,nie2013polynomial}.
They are also useful for solving truncated moment problems
\cite{FiaNie12,nie2015linear}
and tensor decompositions \cite{NieGP17,nie2017low}.
We refer to \cite{lasserre2015introduction,LasICM,Lau09,LauICM,NS09,NieLoc}
for more references about polynomial optimization and moment problems.

\subsection{Lagrange multiplier expressions}
\label{ssc:LME}

We study optimality conditions for Generalized Nash Equilibrium Problems.
Consider the $i$th player's optimization.
For convenience, suppose $\mc{E}_i\cup\mc{I}_i = [m_i]$
and $g_i:=(g_{i,1},\ldots,g_{i,m_i})$. For a given $\xmi$, 
under some suitable constraint qualifications
(e.g., the linear independence constraint qualification (LICQ),
Mangasarian-Fromovite constraint qualification (MFCQ),
or the Slater's Condition; see \cite{Brks} for them),
if $x_i$ is a minimizer of $\mbox{F}_i(\xmi)$,
then there exists a Lagrange multiplier vector
$\lambda_i:=(\lambda_{i,1},\ldots,\lambda_{i,m_i})$ such that
\be
\label{eq:KKTwithLM}
\left\{
\begin{array}{l}
  \nabla_{x_i} f_i(x)-\sum_{j=1}^{m_i}\lambda_{i,j}\nabla_{x_i} g_{i,j}(x)=0,\\
  \lambda_i\perp g_i(x),\,g_{i,j}(x)=0\,(j\in\mathcal{E}_i),\\
  \lambda_{i,j}\ge0\,(j\in\mathcal{I}_i),\,g_{i,j}(x)\ge0\,(j\in\mathcal{I}_i).
\end{array}
\right.
\ee
This is called the first order Karush-Kuhn-Tucker system for $\mbox{F}_i(\xmi)$.
Such $(x_i,\lambda_i)$ is called a critical pair of $\mbox{F}_i(\xmi)$.
Therefore, if $x$ is a GNE, under constraint qualifications,
then (\ref{eq:KKTwithLM}) holds for all $i\in[N]$, i.e., there exist
Lagrange multiplier vectors $\lambda_1,\ldots,\lambda_N$ such that
\be
\label{eq:KKTwithLMalp}
\left\{
\begin{array}{l}
\nabla_{x_i} f_i(x)-\sum_{j=1}^{m_i}\lambda_{i,j}\nabla_{x_i} g_{i,j}(x)=0\,(i\in[N]),\\
\lambda_i\perp g_i(x)\,(i\in[N]),\,g_{i,j}(x)=0\,(i\in[N],j\in\mathcal{E}_i),\\
\lambda_{i,j}\ge0\,(i\in[N],j\in\mathcal{I}_i),\,g_{i,j}(x)\ge0\,(i\in[N],j\in\mathcal{I}_i).\\
\end{array}
\right.
\ee
A point $x$ satisfying (\ref{eq:KKTwithLMalp}) is called a KKT point for the GNEP.
For convex GNEPs, each KKT point is a GNE \cite[Theorem~4.6]{Facchinei2010}.

For each critical pair $(x_i,\lambda_i)$ of $\mbox{F}_i(\xmi)$,
the equation (\ref{eq:KKTwithLM}) implies that
\be
\label{eq:Clmd=df}
\underbrace{\bbm
\nabla_{x_i} g_{i,1}(x) & \nabla_{x_i} g_{i,2}(x) &  \cdots &  \nabla_{x_i} g_{i,m_i}(x) \\
g_{i,1}(x) & 0  & \cdots & 0 \\
0  & g_{i,2}(x)  & \cdots & 0 \\
\vdots & \vdots & \ddots & \vdots \\
0  &  0  & \cdots & g_{i,m_i}(x)
\ebm}_{G_i(x) }
\underbrace{
\bbm  \lmd_{i,1} \\ \lmd_{i,2} \\ \vdots \\ \lmd_{i,m_i} \ebm
}_{\lmd_i}
=
\underbrace{\bbm  \nabla_{x_i} f_i(x)  \\ 0 \\ \vdots \\ 0 \ebm}_{ \hat{f}_i(x)} .
\ee
If there exists a matrix polynomial $L_i(x)$ such that
\begin{equation}
\label{eq:LC=I}
L_i(x)G_i(x)=I_{m_i},
\end{equation}
then the Lagrange multipliers $\lambda_i$ can be expressed as
\[\lambda_i=L_i(x)\hat{f}_i(x).\]
The vector of polynomials $\lambda_{i}(x):=(\lambda_{i,1}(x),\ldots,\lambda_{i,m_i}(x))$ is called a \textit{polynomial expression for Lagrange multipliers}\cite{Nie2019},
where $\lambda_{i,j}(x)$ is the $j$th component of $L_i(x)\hat{f}_i(x)$.
The matrix polynomial $G_i(x)$ is said to be nonsingular
if it has full column rank for all $x\in\cpx^n$. It was shown that
$G_i(x)$ is nonsingular if and only if there exists
$L_i(x)\in \re[x]^{(m_i+n_i)\times m_i}$ such that
\reff{eq:LC=I} holds \cite[Proposition~5.1]{Nie2019}.
The nonsingularity of $G_i(x)$
is independent of objective functions or other player's constraints.

For example, consider the GNEP given by (\ref{eq:firstep}).
The first player's optimization has a polynomial expression of Lagrange multipliers
\be
\label{eq:1stlmd}
\lambda_{1,1}= x_1^T\nabla_{x_1}f_1, \, \lambda_{1,j+1}=\frac{\partial{f_1}(x)}{\partial{x_{1,j}}}-
\lambda_{1,1}x_{2,j}\,(j=1,2,3).
\ee
For the second player, the matrix polynomial $G_2(x)$
is not nonsingular, and polynomial expressions do not exist.
In section~\ref{sc:ne}, we give a rational expression 
for the second player's Lagrange multipliers.

\section{Rational expressions for Lagrange Multipliers}
\label{sc:rtnlmd}

In Section~\ref{ssc:LME}, a polynomial expression for
the $i$th player's Lagrange multipliers exists if and only if
the matrix $G_i(x)$ is nonsingular.
For classical NEPs of polynomials, the nonsingularity holds generically \cite{Nie2019,Nie2020nash}.
However, this is often not the case for GNEPs.
Let $g_i=(g_{i,1},\ldots,g_{i,m_i})$ be the tuple of constraining polynomials
in $\mbox{F}_i(x_{-i})$ and $G_i(x)$ be the matrix polynomial as in (\ref{eq:LC=I}).
If there exists a matrix polynomial $\hat{L}_i(x)$
and a nonzero scalar polynomial $q_i(x)$ such that
\be\label{eq:LC=q}
\hat{L}_i(x)G_i(x) \,= \, q_i(x)\cdot I_{m_i} ,
\ee
then $q_i(x)\lambda_{i}=\hat{L}_i(x)\hat{f}_i(x)$
for all critical pairs $(x_i,\lambda_i)$ of $\mbox{F}_i(\xmi)$. Let
\be
\label{eq:rtnlmd_i}
\hat{\lambda}_{i}(x) \,:= \,
\hat{L}_i(x)\hat{f}_i(x).
\ee
Denote by $\hat{\lambda}_{i,j}(x)$ the $j$th entry of $\hat{\lambda}_{i}(x)$.

\begin{defi} \label{LME:rat} \rm
For the $i$th player's optimization $\mbox{F}_i(\xmi)$,
if there exist polynomials $\hat{\lambda}_{i,1},\ldots,\hat{\lambda}_{i,m_i}$ 
and a nonzero polynomial $q_i$ such that
$q_i(x)\ge 0$ for all $x\in X$, and $\hat{\lambda}_{i,j}(x)=q_i(x)\lambda_{i,j}$ 
holds for all critical pairs $(x_i,\lambda_i)$,
then we call the tuple
\[
\hat{\lambda}_i/q_i:=(\hat{\lambda}_{i,1}(x)/q_i(x),
\ldots,\hat{\lambda}_{i,m_i}(x)/q_i(x))
\]
a rational expression for Lagrange multipliers.
\end{defi}

The following is an example of rational expression.

\begin{exm}\rm
\label{ep:>0}
Consider the 2-player convex GNEP
\be
\begin{array}{lllll}
\label{eq:rational_example}
\min\limits_{x_1\in\re^2} & f_1(x_1,x_2) & \vline & \min\limits_{x_2\in\re^1} & f_2(x_1,x_2)\\
\st              & 2-x_1^Tx_1-x_2\ge0; & \vline & \st              & 3x_2-x_1^Tx_1\ge0 ,\,1-x_2\ge0.
\end{array}
\ee	
The matrices of polynomials $G_1(x)$ and $G_2(x)$ are
\[
G_1(x):=\lvt
\begin{array}{c}
-2x_{1,1}\\
-2x_{1,2}\\
2-x_1^Tx_1-x_2
\end{array}\rvt,\quad
G_2(x):=\lvt
\begin{array}{cc}
3 & -1\\
3x_2-x_1^Tx_1 & 0 \\
0 & 1-x_2
\end{array}\rvt.\]
For $x_1=(0,0)$ and $x_2=2$, the $G_1(x)$ is the zero vector.
For $x_1=(\sqrt{3},0)$ and $x_2=1$, $\rank(G_2(x))=1$.
Both $G_1(x),G_2(x)$ are not nonsingular,
so there are no polynomial expressions for Lagrange multipliers.
However, (\ref{eq:LC=q}) holds for
\be  \label{eq:L1L2>0}
\baray{rl}
q_1(x)=2-x_2,   & q_2(x)=1-\frac{1}{3}x_1^Tx_1, \\
\hat{L}_1(x)=\lvt\begin{array}{ccc}
-\frac{x_{1,1}}{2} & -\frac{x_{1,2}}{2} & 1
\end{array}\rvt,
  &
\hat{L}_2(x)=\lvt\begin{array}{ccc}
\frac{1}{3}-\frac{1}{3}x_2 & \frac{1}{3} & \frac{1}{3}\\
\frac{1}{3}x_1^Tx_1-x_2 & 1 & 1
\end{array}\rvt .
\earay
\ee
The Lagrange multiplier expressions are
\be
\label{eq:lmd_>0}
\lambda_1=\frac{-x_1^T\nabla_{x_1}f_1}{2q_1},\,
\lambda_{2,1}=\frac{(1-x_2)}{3q_2}\cdot\frac{\partial{f_2}}{\partial{x_2}},\,
\lambda_{2,2}=\frac{x_1^Tx_1-3x_2}{3q_2}\cdot\frac{\partial{f_2}}{\partial{x_2}}.
\ee
\end{exm}

In section~\ref{sc:nonnegalmd},
we show that if none of the $g_{i,j}$ is identically zero,
then a rational expression for $\lambda_i$ always exists.

\subsection{Optimality conditions and rational expressions}

Suppose for each $i$, there exists a rational expression
$\hat{\lambda}_i/q_i$ for the $i$th player's Lagrange multiplier vector.
Since $q_{i}(x)\lambda_{i,j}=\hat{\lambda}_i(x)$
and $q_i(x)\ge0$ for all $x\in X$,
the following holds for all KKT points
\be
\label{eq:KKTrational}
\left\{
\begin{array}{l}
q_i(x)\nabla_{x_i} f_i(x)-\sum\nolimits_{j=1}^{m_i}
 \hat{\lambda}_{i,j}(x)\nabla_{x_i} g_{i,j}(x)=0 \, (i\in[N]),\\
\hat{\lambda}_i(x)\perp g_i(x),g_{i,j}(x)=0 \, (j\in\mathcal{E}_i,i\in[N]),\\
g_{i,j}(x)\ge0,\hat{\lambda}_{i,j}(x)\ge0\,(j\in\mathcal{I}_i,i\in[N]).
\end{array}
\right.
\ee
Under some constraint qualifications,
if $x$ is a GNE, then it satisfies (\ref{eq:KKTrational}).
For convex GNEPs, if $x$ satisfies (\ref{eq:KKTrational}) and $q_i(x)>0$,
then $x$ must be a GNE, since it satisfies
(\ref{eq:KKTwithLMalp}) with $\lambda_{i,j}$ given by
$\lambda_{i,j}=\hat{\lambda}_{i,j}(x)/{q_i(x)}.$
This leads us to consider the following optimization problem
\be
\label{eq:KKTfeasoptrtn}
\left\{
\begin{array}{rll}
\min\limits_{x\in X} & [x]_1^T\Theta[x]_1 \\
\st &q_i(x)\nabla_{x_i} f_i(x)-\sum\nolimits_{j=1}^{m_i}\hat{\lambda}_{i,j}(x)\nabla_{x_i} g_{i,j}(x)=0\,(i\in[N]),\\
	&\hat{\lambda}_{i,j}(x)\perp g_{i,j}(x)\,(j\in\mathcal{E}_i\cup\mc{I}_i,i\in[N]),\\
	&\hat{\lambda}_{i,j}(x)\ge0\,(j\in\mathcal{I}_i,i\in[N]).
\end{array}
\right.
\ee
In the above,
$\Theta$ is a generically chosen positive definite matrix.
The following proposition is straightforward.
\begin{prop}
\label{pr:rtn}
For the GNEPP given by (\ref{eq:GNEP}), suppose for each $i\in[N]$,
the Lagrange multiplier vector $\lambda_i$
has the rational expression as in Definition~\ref{LME:rat}.
\begin{enumerate}[(i)]
\item If (\ref{eq:KKTfeasoptrtn}) is infeasible,
then the GNEP has no KKT points. Therefore,
if every GNE is a KKT point,
then the infeasibility of (\ref{eq:KKTfeasoptrtn})
implies the nonexistence of GNEs.

\item Assume the GNEP is convex.
If $u$ is a feasible point of (\ref{eq:KKTfeasoptrtn})
and $q_i(u)>0$ for all $i\in[N]$, then $u$ must be a GNE.
\end{enumerate}
\end{prop}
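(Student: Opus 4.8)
The plan is to show that the feasible set of \eqref{eq:KKTfeasoptrtn} coincides, after clearing or restoring denominators, with the KKT system \eqref{eq:KKTwithLMalp}. Both assertions then reduce to the elementary bookkeeping of multiplying or dividing the KKT relations by the scalar polynomials $q_i$, using only the defining property of a rational expression from Definition~\ref{LME:rat} together with the sign of $q_i$.

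For part (i), I would argue by contrapositive: assume the GNEP has a KKT point $x$ and produce a feasible point of \eqref{eq:KKTfeasoptrtn}. By definition $x\in X$, so $q_i(x)\ge0$ for every $i$, and each $(x_i,\lambda_i)$ is a critical pair of $\mbox{F}_i(\xmi)$; hence Definition~\ref{LME:rat} gives $\hat{\lambda}_{i,j}(x)=q_i(x)\lambda_{i,j}$. Multiplying the stationarity equation in \eqref{eq:KKTwithLMalp} by $q_i(x)$ converts $\sum_j\lambda_{i,j}\nabla_{x_i}g_{i,j}$ into $\sum_j\hat{\lambda}_{i,j}(x)\nabla_{x_i}g_{i,j}$, which is exactly the first constraint of \eqref{eq:KKTfeasoptrtn}. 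The sign conditions $\hat{\lambda}_{i,j}(x)=q_i(x)\lambda_{i,j}\ge0$ for $j\in\mathcal{I}_i$ hold since both factors are nonnegative, and the componentwise complementarity $\hat{\lambda}_{i,j}(x)g_{i,j}(x)=0$ follows once we have $\lambda_{i,j}g_{i,j}(x)=0$. The one point deserving care is extracting componentwise complementarity from the vector orthogonality $\lambda_i\perp g_i(x)$: for $j\in\mathcal{E}_i$ the factor $g_{i,j}(x)$ already vanishes, while for $j\in\mathcal{I}_i$ the summands $\lambda_{i,j}g_{i,j}(x)$ are each nonnegative and sum to zero, forcing each to vanish. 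Thus $x$ is feasible for \eqref{eq:KKTfeasoptrtn}, proving the first sentence by contraposition; the second sentence is then immediate, since nonexistence of KKT points precludes GNEs whenever every GNE is a KKT point.

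For part (ii), I would run the same computation in reverse. Given a feasible $u$ of \eqref{eq:KKTfeasoptrtn} with $q_i(u)>0$, I set $\lambda_{i,j}:=\hat{\lambda}_{i,j}(u)/q_i(u)$, which is well defined by strict positivity. Dividing the first constraint of \eqref{eq:KKTfeasoptrtn} by $q_i(u)$ recovers the stationarity equation of \eqref{eq:KKTwithLMalp}; the complementarity $\hat{\lambda}_{i,j}(u)g_{i,j}(u)=0$ and the sign condition $\hat{\lambda}_{i,j}(u)\ge0$, together with $u\in X$ (which gives $g_{i,j}(u)=0$ on $\mathcal{E}_i$ and $g_{i,j}(u)\ge0$ on $\mathcal{I}_i$), transfer to $\lambda_{i,j}$ after dividing by the positive number $q_i(u)$. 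Hence $u$ satisfies \eqref{eq:KKTwithLMalp} and is a KKT point of the GNEP, so the fact recalled above that every KKT point of a convex GNEP is a GNE finishes the proof.

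Since the correspondence is purely algebraic, I expect no genuine analytic obstacle; the only subtle place is the role of the sign of $q_i$. The nonnegativity $q_i\ge0$ on $X$ is precisely what validates the transfer of the inequality constraints in part (i), and the strict positivity $q_i(u)>0$ is precisely what is needed in part (ii) to divide back. Without strict positivity a feasible point lying on the zero set of some $q_i$ need not yield a genuine multiplier, which is exactly why that hypothesis cannot be dropped; this is the single assumption I would track most carefully throughout.
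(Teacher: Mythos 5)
Your proposal is correct and follows essentially the same route as the paper, which states the proposition without a formal proof and justifies it in the surrounding text by exactly this bookkeeping: multiplying the KKT relations by $q_i(x)\ge 0$ on $X$ for one direction, dividing by $q_i(u)>0$ and invoking that every KKT point of a convex GNEP is a GNE for the other. Your extra care in extracting componentwise complementarity from the vector orthogonality $\lambda_i\perp g_i(x)$ is a correct detail the paper leaves implicit.
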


In Proposition~\ref{pr:rtn} (ii), if $q_i(u)=0$,
then $u$ may not be a GNE. The following is such an example.

\begin{exm}\rm
\cite[Example A.8]{FacKan10}
\label{ep:A8}
Consider the 3-player convex GNEP
\[
\begin{array}{llllllll}
\min\limits_{x_1\in\re^1} & -x_1   & \vline & \min\limits_{x_2\in\re^1} & (x_2-0.5)^2 & \vline & \min\limits_{x_3\in\re^1} & (x_3-1.5x_1)^2\\
\st     & x_3\le x_1+x_2\le 1, & \vline & \st              & x_3\le x_1+x_2\le 1, & \vline & \st & 0\le x_3\le2.\\
        & x_1\ge0; & \vline &  & x_2\ge0; & \vline &
\end{array}
\]
For the first two players ($i=1,2$), the equation (\ref{eq:LC=q}) holds for
\[
\hat{L}_i(x) := \lvt
\begin{array}{cccc}
x_i(1-x_1-x_2) & x_i & x_i & x_1+x_2-1\\
x_i(x_3-x_1-x_2) & x_i & x_i & x_1+x_2-x_3\\
0 & 0 & 0 & 1-x_3
\end{array}\rvt,\ q_i(x) := x_i(1-x_3).
\]
For the third player ($i=3$), the equation (\ref{eq:LC=q}) holds for
\[
\hat{L}_3(x) := \frac{1}{2}\cdot\lvt
\begin{array}{cccc}
2-x_3 & 1 & 1\\
-x_3 & 1 & 1
\end{array}\rvt,
\quad q_3 := 1.
\]
The Lagrange multiplier expressions can be obtained by letting
$\hat{\lambda}_i(x):=\hat{L}_i(x)\hat{f}_i(x)$.
It is clear that
$u_1=0,u_2=0.5,u_3=0$ satisfy (\ref{eq:KKTwithLMalp}) with $q_1(u) =0$.
However, $u_1=0$ is not a minimizer for the first player's optimization
$\mbox{F}_1(u_{-1})$. It is interesting to note that for
$u_1 =\frac{2}{3}, u_2 =\frac{1}{3}, u_3 =1$,
the tuple $u = (u_1, u_2, u_3)$ satisfies (\ref{eq:KKTwithLMalp})
with $q_1(u)=q_2(u)=0$, but $u$ is still a GNE \cite{FacKan10}.
\end{exm}

We would like to remark that for some special GNEPs,
the equality $q_i(u)=0$ may imply that $u_i$ is a minimizer of $\mbox{F}_i(u_{-i})$.
See Example~\ref{ep:jointball} for such a case.

\subsection{Existence of rational expressions}
\label{sc:nonnegalmd}

We study the existence of rational expressions with nonnegative $q_i(x)$.
The following is a useful lemma.

\begin{lem}
\label{lm:rtnext}
For the $i$th player's optimization $\mbox{F}_i(\xmi)$,
if every $g_{i,j}(x)$ is not identically zero,
then a rational expression exists for $\lambda_i$.
\end{lem}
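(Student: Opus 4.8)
The plan is to reduce the statement to constructing a single matrix polynomial $\hat{L}_i(x)$ together with a nonzero scalar polynomial $q_i(x)$ satisfying (\ref{eq:LC=q}), with the extra feature that $q_i(x)\ge 0$ on all of $\re^n$ (hence on $X$). Once such a pair is in hand, multiplying the critical-pair identity (\ref{eq:Clmd=df}) on the left by $\hat{L}_i(x)$ yields $q_i(x)\lambda_i=\hat{L}_i(x)\hat{f}_i(x)=\hat{\lambda}_i(x)$, which is exactly the rational expression demanded by Definition~\ref{LME:rat}.

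The central observation is that the block structure of $G_i(x)$ forces its columns to be linearly independent over the field $\re(x)$ of rational functions. Indeed, the last $m_i$ rows of $G_i(x)$ form the diagonal matrix $\mathrm{diag}(g_{i,1},\ldots,g_{i,m_i})$. If $\sum_{j} c_j(x)$ times the $j$th column vanishes for some $c_j\in\re(x)$, then reading off the row indexed by the $k$th diagonal entry gives $c_k(x)g_{i,k}(x)=0$; since no $g_{i,k}$ is identically zero, each $c_k=0$. Hence $G_i(x)$ has full column rank over $\re(x)$, and the symmetric matrix polynomial $G_i(x)^T G_i(x)$ is invertible over $\re(x)$: if $v^T (G_i^T G_i) v=0$ over $\re(x)$, then $\|G_i v\|^2$ is a sum of squares of rational functions equal to zero, forcing $G_i v=0$ and thus $v=0$. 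In particular $\det\big(G_i(x)^T G_i(x)\big)$ is a nonzero polynomial.

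With this in place I would set
\[
q_i(x):=\det\big(G_i(x)^T G_i(x)\big),\qquad
\hat{L}_i(x):=\mathrm{adj}\big(G_i(x)^T G_i(x)\big)\,G_i(x)^T,
\]
both genuine polynomial objects because the adjugate has polynomial entries. The adjugate identity gives
\[
\hat{L}_i(x)G_i(x)=\mathrm{adj}\big(G_i^T G_i\big)\big(G_i^T G_i\big)=\det\big(G_i^T G_i\big)I_{m_i}=q_i(x)I_{m_i},
\]
so (\ref{eq:LC=q}) holds. The nonnegativity of $q_i$ is automatic: for every real $x$ the Gram matrix $G_i(x)^T G_i(x)$ is positive semidefinite, so its determinant is $\ge 0$ on all of $\re^n$, in particular on $X$. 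Defining $\hat{\lambda}_i(x):=\hat{L}_i(x)\hat{f}_i(x)$ as in (\ref{eq:rtnlmd_i}) then verifies every clause of Definition~\ref{LME:rat}.

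The only step carrying real content is the full-column-rank claim, and this is precisely where the hypothesis that no $g_{i,j}$ is identically zero is used; without it a diagonal entry could vanish identically, the corresponding column of $G_i$ would be expressible through the others, and $\det(G_i^T G_i)$ would collapse to the zero polynomial. The remaining facts — polynomiality of the adjugate construction and the sign of $q_i$ — are formal, the latter following for free from the Gram-matrix form $G_i^T G_i$. I therefore expect no genuine difficulty beyond isolating and exploiting the diagonal block of constraint values.
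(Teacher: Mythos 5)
Your proposal is correct and follows essentially the same route as the paper: both take $H_i(x)=G_i(x)^TG_i(x)$, set $q_i=\det H_i$ and $\hat{L}_i=\mathrm{adj}(H_i)\,G_i^T$, and get nonnegativity of $q_i$ from the Gram-matrix structure. The only difference is that you spell out (via the diagonal block of constraint values and the formal reality of $\re(x)$) why $\det H_i$ is not identically zero, a step the paper asserts without detail.
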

\begin{proof}
Let $H_i(x)=G_i(x)^TG_i(x)$, where $G_i(x)$ 
is the matrix polynomial in \reff{eq:Clmd=df}.
If every $g_{i,j}(x)$ is not identically zero,
then the determinant $\det H_i(x)$ is also not identically zero.
Let $\mbox{adj}\,H_i(x)$ denote the adjoint matrix of $H_i(x)$, then
\[ H_i(x)  \cdot \mbox{adj}\, H_i(x)  \, = \, \det H_i(x) \cdot I_{m_i}.\]
For $\hat{L}_i(x) :=\mbox{adj}\,H_i(x)\cdot G_i(x)^T$,
we get the rational expression
\be
\label{eq:trivialL}
\lambda_{i,j}(x) \,= \,
\frac{1}{ \det H_i(x)}\hat{L}_i(x)\cdot\hat{f}_i(x) .
\ee
Moreover, $q_i(x) \geq 0$ for all $x$, since $H_i(x)$
is positive semidefinite everywhere.
\end{proof}

The rational expression in (\ref{eq:trivialL}) may not be very practical,
because the determinantal polynomials often have high degrees.
In practice, we usually have rational expressions with low degrees.
If each $q_i(x)>0$ for all $x\in X$, then every solution of (\ref{eq:KKTfeasoptrtn})
is a GNE. One wonders when a rational expression exists
with $q_i(x)>0$ on $X$.
The matrix polynomial $G_i$ is said to be \textit{nonsingular on $X$}
if $G_i(x)$ has full column rank for all $x\in X$.
For the GNEP given in Example~\ref{ep:>0}, both $G_1(x)$ and $G_2(x)$
are nonsingular on $X$.
The following proposition is useful.

\begin{prop} \label{pr:nonsingoverK}
The matrix $G_i(x)$ is nonsingular on $X$ if and only if
there exists a matrix polynomial $\hat{L}_i(x)$
satisfying \reff{eq:LC=q} with $q_i(x)>0$ on $X$.
\end{prop}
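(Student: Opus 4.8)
The plan is to prove the two implications separately, reusing the Gram-matrix construction from the proof of Lemma~\ref{lm:rtnext} for the hard direction. The reverse implication ($\Leftarrow$) is essentially immediate: suppose there is a matrix polynomial $\hat{L}_i(x)$ and a polynomial $q_i(x)$ with $q_i(x)>0$ on $X$ such that $\hat{L}_i(x)G_i(x)=q_i(x)\cdot I_{m_i}$ as in \reff{eq:LC=q}. Fix any $x\in X$ and let $v$ satisfy $G_i(x)v=0$. Multiplying on the left by $\hat{L}_i(x)$ gives $q_i(x)\,v=\hat{L}_i(x)G_i(x)v=0$; since $q_i(x)$ is a nonzero real number, $v=0$. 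Thus $G_i(x)$ has trivial kernel, i.e.\ full column rank, for every $x\in X$, which is exactly the statement that $G_i$ is nonsingular on $X$. (Because $x\in X\subseteq\re^n$ makes $G_i(x)$ a real matrix, it makes no difference whether the rank is computed over $\re$ or $\cpx$.)

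For the forward implication ($\Rightarrow$), I would exhibit the certificate explicitly. Set $H_i(x):=G_i(x)^TG_i(x)$, an $m_i\times m_i$ symmetric matrix polynomial, and take
\[
q_i(x):=\det H_i(x),\qquad \hat{L}_i(x):=\mbox{adj}\,H_i(x)\cdot G_i(x)^T,
\]
precisely as in the proof of Lemma~\ref{lm:rtnext}. The adjugate identity $H_i\cdot\mbox{adj}\,H_i=\det H_i\cdot I_{m_i}$ then gives $\hat{L}_i(x)G_i(x)=\mbox{adj}\,H_i(x)\cdot H_i(x)=\det H_i(x)\cdot I_{m_i}=q_i(x)I_{m_i}$, so \reff{eq:LC=q} holds for this choice. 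What remains is to verify that this $q_i$ is strictly positive on $X$, and this is where the full-column-rank hypothesis is consumed.

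The crux of the argument, and the step I expect to be the only delicate one, is the linear-algebra dictionary: for a real matrix $A$, full column rank is equivalent to $A^TA\succ0$, hence to $\det(A^TA)>0$. Since $H_i(x)\succeq0$ for every $x$, the polynomial $q_i=\det H_i$ is already nonnegative everywhere; the hypothesis that $G_i(x)$ has full column rank on $X$ upgrades this to $H_i(x)\succ0$, and therefore $q_i(x)=\det H_i(x)>0$, for all $x\in X$. In particular $q_i$ is a nonzero polynomial whenever $X\neq\emptyset$ (the case $X=\emptyset$ being vacuous), so it is a legitimate denominator in the sense of Definition~\ref{LME:rat}. The key point worth emphasizing is that no Positivstellensatz-type machinery is needed to pass from everywhere-nonnegativity to strict positivity on $X$: the Gram-matrix/positive-definiteness equivalence delivers the strict inequality directly from the rank assumption, which is exactly what makes both directions of the equivalence line up.
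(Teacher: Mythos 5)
Your proposal is correct and follows essentially the same route as the paper: the forward direction uses the Gram matrix $H_i(x)=G_i(x)^TG_i(x)$ with $q_i=\det H_i$ and the adjugate identity, and the reverse direction reads off full column rank from $\hat{L}_i(x)G_i(x)=q_i(x)I_{m_i}$ with $q_i(x)>0$. Your version is in fact slightly more careful than the paper's, which writes $\hat{L}_i(x):=\mbox{adj}\,H_i(x)$ where the factor $G_i(x)^T$ (as in the proof of Lemma~\ref{lm:rtnext}) is needed, and which dismisses the reverse direction as ``clear'' where you supply the one-line kernel argument.
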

\begin{proof}
First, if the matrix polynomial $G_i(x)$ has full column rank for all $x\in X$,
let $H_i(x) :=G_i(x)^TG_i(x)$,
then $H_i(x)$ is positive definite and the determinant $\det H_i(x)>0$
for all $x \in X$. Therefore, for
$\hat{L}_i(x) := \mbox{adj}\, H_i(x)$,
the equation (\ref{eq:trivialL}) is satisfied with
$q_i(x) := \det H_i(x)>0$ over $X$.
Second, if \reff{eq:LC=q} holds with $q_i(x)>0$ on $X$,
then $G_i(x)$ is clearly nonsingular on $X$.
\end{proof}

\begin{remark}
If $G_i(x)$ is nonsingular on $X$,
then the LICQC must hold for the $i$th player's optimization.
Furthermore, if this holds for all $i\in[N]$,
then all GNEs are KKT points.
\end{remark}

\subsection{A numerical method for finding rational expressions}

We give a numerical method for finding rational expressions for Lagrange multipliers.
It was introduced in \cite{Nie2020bilevel} for solving bilevel optimization problems.
Let $G_i(x)$ be the matrix polynomial defined in (\ref{eq:Clmd=df}).
For convenience, denote the tuples
\[
g_{\mc{E}}:= (g_{i,j})_{ i\in[N],j\in\mc{E}_i},\quad
g_{\mc{I}}:= (g_{i,j})_{ i\in[N],j\in\mc{I}_i }.
\]
For a priori degree $d$,
consider the following linear convex optimization:
\be  \label{eq:findL}
\left \{ \begin{array}{rl}
\max\limits_{\hat{L}_i,q_i,\gamma}  & \gamma  \\
\st & \hat{L}_i\cdot G_i= q_i\cdot I_{m_i},\, q_i( v )=1, \\
	& q_i-\gamma\in\idl[g_{\mc{E}}]_{2d}+\qmod[g_{\mc{I}}]_{2d}, \\
	& \hat{L}_i\in(\re[x]_{2d-\deg{G_i}})^{m_i\times (m_i+n_i)}.
\end{array} \right.
\ee
In the above, the first equality is the same as (\ref{eq:LC=q}).
The second equality ensures that $q_i$ is not identically zero,
where $v$ is a priori point in $X$.
The constraint $q_i-\gamma\in\idl[g_{\mc{E}_i}]+\qmod[g_{\mc{E}_i}]$
forces the $q_i(x) \ge \gamma$ on $X$.
Therefore, if the maximum $\gamma$ is positive, then
$q_i(x)>0$ on $X$. By Lemma~\ref{lm:rtnext}, one can always find
a feasible $\gamma \ge 0$ satisfying (\ref{eq:findL}), for some $d\le\deg (H(x))$,
if none of $g_{i,j}(x)$ is identically zero.
By Proposition~\ref{pr:nonsingoverK}, if each $G_i(x)$
is nonsingular on $X$ and the archimedeanness holds for $X$,
then there must exist $\gamma > 0$ satisfying (\ref{eq:findL}) for some $d$.
If $(\hat{L}_i,q_i,\gamma)$ is a feasible point of \reff{eq:findL},
then one can get a rational expression for Lagrange
multipliers by letting $\hat{\lambda}_{i,j}(x)=\hat{L}_i(x)\hat{f}_i(x)$.

\begin{exm}\rm
Consider the GNEP in Example~\ref{ep:>0}. We have
\[g_{\mc{E}}=\emptyset,\ g_{\mc{I}}=(2-x_1^Tx_1-x_2,3x_2-x_1^Tx_1,1-x_2).\]
Let $\hat{L}_1(x)$ and $\hat{L}_2(x)$ be the matrix polynomials in (\ref{eq:L1L2>0}),
and $q_1(x)=2-x_2,q_2(x)=1-\frac{1}{3}x_1^Tx_1$.
Let $ v :=(0,0,1)$ for both players,
and $\gamma_1=1$, $\gamma_2=1/2$.
Then, the $(\hat{L}_i(x),q_i(x),\gamma_i)$
is a feasible point of (3.11), for each $i=1,2$.
In fact, we have
\[
\begin{array}{l}
q_1( v )=q_2( v )=1, \quad
q_1(x)-\gamma_1=1-x_2=0+1\cdot(1-x_2)\in\qmod[g_{\mc{I}}]_{2},\\
q_2(x)-\gamma_2=\frac{1}{2}-\frac{1}{3}x_1^Tx_1=\frac{1}{4}(2-x_1^Tx_1-x_2)
+\frac{1}{12}(3x_2-x_1^Tx_1)\in\qmod[g_{\mc{I}}]_{2}.
\end{array}
\]
The rational expressions for Lagrange multipliers are given by \reff{eq:lmd_>0}.
\end{exm}

\begin{exm}\rm
\label{ep:jointball}
Consider the following GNEP
\[
\begin{array}{lllll}
\min\limits_{x_1\in\re^3} & f_1(x_1,x_2) & \vline & \min\limits_{x_2\in\re^3} & f_2(x_1,x_2)\\
\st              & 1-x_1^Tx_1-x_2^Tx_2\ge0; & \vline & \st              & 1-x_1^Tx_1-x_2^Tx_2\ge0.
\end{array}
\]
The constraining tuples $g_{\mc{E}}:=\emptyset,\ g_{\mc{I}}:=(1-x_1^Tx_1-x_2^Tx_2).$
Let $ v :=(0,0,0)$, $\gamma_1=\gamma_2=0$, $q_1(x)=1-x_2^Tx_2$, $q_2(x)=1-x_1^Tx_1$, and
\[\hat{L}_1=\lvt-\frac{1}{2}x_{1,1},\,-\frac{1}{2}x_{1,2},\,-\frac{1}{2}x_{1,3},\,1\rvt,\
\hat{L}_2=\lvt-\frac{1}{2}x_{2,1},\,-\frac{1}{2}x_{2,2},\,-\frac{1}{2}x_{2,3},\,1\rvt.\]
One can verify that $q_1( v )=q_2( v )=1$ and
\[
\begin{array}{l}
q_1(x)-\gamma_1=1-x_2^Tx_2=x_1^Tx_1+1\cdot (1-x_1^Tx_1-x_2^Tx_2)\in\qmod[g_{\mc{I}}]_{2},\\
q_2(x)-\gamma_2=1-x_1^Tx_1=x_2^Tx_2+1\cdot (1-x_1^Tx_1-x_2^Tx_2)\in\qmod[g_{\mc{I}}]_{2}.
\end{array}
\]
By Proposition~\ref{pr:nonsingoverK},
we know $(\hat{L}_1(x),q_1(x),\gamma_1)$ and $(\hat{L}_2(x),q_2(x),\gamma_2)$ are minimizers of (\ref{eq:findL}) for $i=1,2$ respectively.
Therefore, we get the rational expression
\be\label{eq:lmd_ball}
\lambda_1=\frac{-x_1^T \nabla_{x_1}f_1}{2\cdot q_1(x)}, \
\lambda_2=\frac{-x_2^T \nabla_{x_2}f_2}{2\cdot q_2(x)}.
\ee
For each $i=1,2$, if $q_i(x)=0$,
then $0\leq {x_i}^Tx_i\leq 1-{x_{-i}}^Tx_{-i}=0.$
This implies $x_i=(0,0,0)$ is the only feasible point of the $i$th player's optimization and hence it is the minimizer.
Therefore, each feasible point of (\ref{eq:KKTfeasoptrtn}) is a GNE.
\end{exm}

One can solve (\ref{eq:findL}) numerically for getting rational expressions.
This is done in Example~\ref{ep:num_lmd}.

\section{Parametric expressions for Lagrange multipliers}
\label{sc:prt}

For some GNEPs, it may be difficult to find convenient rational expressions
for Lagrange multipliers. Sometimes, the denominators may have high degrees.
This is the case especially when $m_i>n_i$.
If some $q_i$ has high degree, the polynomial optimization
(\ref{eq:KKTrational}) also has a high degree,
which makes the result moment SDP relaxations (see subsections~\ref{sc:opt4all} and \ref{sc:opt4one}) very difficult to be solved.
To fix such issues, we introduce parametric expressions for Lagrange multipliers.

\begin{defi} \label{def:para:LME} \rm
For the $i$th player's optimization $\mbox{F}_i(\xmi)$,
a parametric expression for the Lagrange multipliers is a tuple of polynomials
\[ \hat{\lambda}_{i}(x, \omega_i) := (\hat{\lambda}_{i,1}(x, \omega_i),\ldots,\hat{\lambda}_{i,m_i}(x, \omega_i) ), \]
in $x$ and in a parameter $\omega_i:=(\omega_{i,1},\ldots,\omega_{i,s_i})$ with $s_i\le m_i$,
such that $(x_i,\lambda_i)$ is a critical pair if and only if
there is a value of $\omega_i$ such that (\ref{eq:KKTwithLM}) is satisfied for
$\lambda_{i,j}=\hat{\lambda}_{i,j}(x,\omega_i)$ with $j\in[m_i]$.
\end{defi}

The following is an example of parametric expressions.

\begin{exm}\rm
\label{ep:partialep}
Consider the 2-player convex GNEP
\[
\begin{array}{lllll}
\min\limits_{x_1\in\re^2} & f_1(x_1,x_2) & \vline &
           \min\limits_{x_2\in\re^2} & f_2(x_1,x_2)   \\
\st              & x_{1,1}-2x_{1,2}+x_{2,2}\ge0, & \vline
              & \st &x_{1,2}+x_{2,2}-x_{2,1}^2+1\ge0, \\
 & 1-x_{2,1}\cdot x_1^Tx_1\ge0,& \vline&&2-x_{2,2}\ge0,1+x_{2,2}\ge0,\\
				 & x_{1,1}\ge0,x_{1,2}\ge0;&   \vline&&x_{2,1}\ge0.\\
\end{array}
\] 
The Lagrange multipliers can be expressed as
\be
\label{eq:lmd_partep}
\left\{
\begin{array}{l}
\lambda_{1,1}= \omega_{1,1},\\
\lambda_{1,2}=\frac{1}{2}x_{1,1}(\frac{\partial{f_1}}{\partial{x_{1,1}}}-\omega_{1,1})+\frac{1}{2}x_{1,2}(\frac{\partial{f_1}}{\partial{x_{1,2}}}+2\omega_{1,1}),\\
\lambda_{1,3}=\frac{\partial{f_1}}{\partial{x_{1,1}}}-\omega_{1,1}+2x_{2,1}x_{1,1}\lambda_{1,2},\\
\lambda_{1,4}=\frac{\partial{f_1}}{\partial{x_{1,2}}}+2\omega_{1,1}+2x_{2,1}x_{1,2}\lambda_{1,2};\\
\lambda_{2,1}=\omega_{2,1},\\
\lambda_{2,2}=-\frac{1}{3}\cdot\left[(\frac{\partial{f_2}}{\partial{x_{2,1}}}+2x_{2,1}\omega_{2,1})x_{2,1}+(\frac{\partial{f_2}}{\partial{x_{2,2}}}-\omega_{2,1})(x_{2,2}+1)\right],\\
\lambda_{2,3}=\frac{\partial{f_2}}{\partial{x_{2,2}}}+\lambda_{2,2}-\omega_{2,1},\\
\lambda_{2,4}=\frac{\partial{f_2}}{\partial{x_{2,1}}}+2x_{2,1}\omega_{2,1}.
\end{array}\right.
\ee
\end{exm}

Parametric expressions are quite useful for solving the GNEPs.
The following are some useful cases.
\begin{itemize}

\item [(i)] Suppose the $i$th player's optimization $\mbox{F}_i(\xmi)$
contains the nonnegative constraints, i.e., its constraints are
\[
x_{i,1} \ge 0,\ldots, x_{i,n_i} \ge 0, \quad
g_{i,j}(x) \ge 0 \, (j=n_i+1, \ldots, m_i).
\] 
Let $s_i := m_i-n_i$, then a parametric expression is
\be
\label{eq:nonnegalmd}
\boxed{
\begin{array}{l}
(\lambda_{i,1},\ldots,\lambda_{i,n_i})=
\nabla_{x_i}f_i-\sum_{k=1}^{s_i}\omega_{i,k}\cdot\nabla_{x_i}g_{i,k+n_i},\\
 (\lambda_{i,n_i+1},\ldots,\lambda_{i,m_i}) =(\omega_{i,1},\ldots,\omega_{i,s_i}) .
\end{array}
}
\ee

\item [(ii)]  Suppose the $i$th player's optimization $\mbox{F}_i(\xmi)$
contains box constraints, i.e., its constraints are
\[
\baray{rl}
x_{i,j}-a_{i,j} \ge 0, \, b_{i,j}-x_{i,j} \ge 0, & j = 1,\ldots, n_i \\
g_{i,j}(x) \ge 0. & j=n_i+1, \ldots, m_i
\earay
\]
Let $s_i :=m_i-2n_i$, then a parametric expression is
\be
\label{eq:boxlmd}
\boxed{
\begin{array}{ll}
\lambda_{i,j}= \frac{b-x_{i,j}}{b-a}\cdot\left(\frac{\partial{f_i}}{\partial{x_{i,j}}}-
\sum_{k=1}^{s_i}\omega_{i,k}\cdot\frac{\partial{g_{i,k+2n_i}}}{\partial{x_{i,j}}}\right), & j=1,3,\ldots,2n_i-1 \\
\lambda_{i,j}= \frac{a-x_{i,j}}{b-a}\cdot\left(\frac{\partial{f_i}}{\partial{x_{i,j}}}-
\sum_{k=1}^{s_i}\omega_{i,k}\cdot\frac{\partial{g_{i,k+2n_i}}}{\partial{x_{i,j}}}\right), & j=2,4,\ldots,2n_i \\
\lambda_{i,j}= \omega_{i,j-2n_i}.	
 & j=2n_i+1,\ldots,m_i
\end{array}
}
\ee

\item [(iii)] Suppose the $i$th player's optimization $\mbox{F}_i(\xmi)$
contains simplex constraints, i.e., its constraints are
\[
1-e^Tx_i \ge 0, x_{i,1} \ge 0, \ldots, x_{i,n_i} \ge 0, \,
g_{i,j}(x) \ge 0, \, j=n_i+2,\ldots, m_i.
\]
Let $s_i := m_i-n_i-1$, then a parametric expression is
\be \label{eq:simplexlmd}
\boxed{
\begin{array}{ll}
\lambda_{i,j}=(\nabla_{x_i}f_i-\sum_{k=1}^{s_i}\omega_{i,k}\cdot\nabla_{x_i}g_{i,k+n_i+1})^Tx_i, & j=1\\
\lambda_{i,j}= \frac{\partial{f_i}}{\partial{x_{i,j-1}}}-
\sum_{k=1}^{s_i}\omega_{i,k}\cdot\frac{\partial{g_{i,k+n_i+1}}}{\partial{x_{i,j-1}}}-
\lambda_{i,1}, & j=2,\ldots,n_i+1\\
\lambda_{i,j}= \omega_{i,j-n_i-1}. & j=n_i+2,\ldots,m_i
\end{array}
}
\ee

\item [(iv)] Suppose the $i$th player's optimization $\mbox{F}_i(\xmi)$
contains linear constraints, i.e., its constraints are
\[
a_j^Tx_i - b_j(x_{-i}) \ge 0, \, j=1,\ldots, r, \quad
g_{i,j}(x) \ge 0, \, j=r+1,\ldots, m_i,
\]
where each $b_j$ is a polynomial in $x_{-i}$.
Let $A=\bbm a_1 & \cdots & a_r \ebm^T$. Assume $\rank{A}=r$.
If we let $s_i := m_i-r$, then a parametric expression is
\[
\boxed{
\begin{array}{l}
(\lambda_{i,1},\ldots,\lambda_{i,r})=(AA^T)^{-1}A(
\nabla_{\xpi}f_i-\sum_{k=1}^{s_i}\omega_{i,k}\cdot\nabla_{x_i}g_{i,k+r}),\\ (\lambda_{i,r+1},\ldots,\lambda_{i,m_i}) =
(\omega_{i,1},\ldots,\omega_{i,s_i}).
\end{array}
}
\]

\item [(v)]
Suppose there exists a labeling subset
$\mc{T}_i :=(t_1,\ldots,t_r)\subseteq [m_i]$ such that
\[
\label{eq:lchat}
\hat{G}_{i}(x):=\left[
\begin{array}{rrr}
\nabla_{x_i} g_{i,t_1}(x) & \ldots & \nabla_{x_i} g_{i,t_r}(x)\\
 g_{i,t_1}(x) &&\\
 & \ddots &\\
 &&g_{i,t_r}(x)
\end{array}
\right]
\]
is nonsingular for all $x \in \cpx^n$.
By \cite[Proposition~5.1]{Nie2019}, there exists a matrix polynomial $D_{i}(x)$ such that $D_{i}(x)\cdot \hat{G}_{i}(x)=I_r$.
Let $s_i := m_i-r$, then a parametric expression is
\[
\boxed{
\begin{array}{l}
(\lambda_{i,1},\ldots,\lambda_{i,r})=D_{i}(x)(\nabla_{x_i}f_i-
\sum_{k=1}^{s_i}\omega_{i,k}\cdot\nabla_{x_i}g_{i,k+r}),\\
 (\lambda_{i,r+1},\ldots,\lambda_{i,m_i})= (\omega_{i,1},\ldots,\omega_{i,s_i}).
\end{array}
}
\]

\end{itemize}

We would like to remark that parametric expressions
for Lagrange multipliers always exist.
For instance, one can get a parametric expression by letting
$\omega_{i,j} = \lambda_{i,j}$ for all $j$.
Such expression is called a {\it trivial parametric expression}.
However, it is preferable to have small $s_i$, to save computational costs.

\subsection{Optimality conditions and parametric expressions}

Suppose all players have parametric expressions for their Lagrange multipliers
as in Definition~\ref{def:para:LME}. 
Let $s := s_1+\ldots+s_N$, and denote
\[
\allx \,:=\, (x,\omega_1,\ldots,\omega_N). 
\]
The optimality conditions (\ref{eq:KKTwithLMalp})
can be equivalently expressed as
\be
\label{eq:KKTpartial}
\left\{
\begin{array}{l}
\nabla_{x_i} f_i(x)-\sum_{j=1}^{m_i}\hat{\lambda}_{i,j}(\allx)\nabla_{x_i} g_{i,j}(x)=0 \, (i\in[N]),\\
\hat{\lambda}_i(\allx)\perp g_i(x),g_{i,j}(x)=0 \, (j\in\mathcal{E}_i, i\in[N]),\\
g_{i,j}(x)\ge0,\hat{\lambda}_{i,j}(\allx)\ge0\,(j\in\mathcal{I}_i,i\in[N]).
\end{array}
\right.
\ee
For convex GNEPs, a point $x$ is a GNE if and only if there exists
$\omega:= (\omega_1,\ldots,\omega_N)$ such that $\allx$ satisfies (\ref{eq:KKTpartial}).
Therefore, we consider the optimization
\be
\label{eq:KKTfeasoptprt}
\left\{
\begin{array}{rll}
\min\limits_{\allx\in X\times\re^s} & \lvt\allx\rvt_1^T\Theta\lvt\allx\rvt_1 \\
\st &\nabla_{x_i} f_i(x)-\sum\nolimits_{j=1}^{m_i}\hat{\lambda}_{i,j}(\allx)\nabla_{x_i} g_{i,j}(x)=0 \, (i\in[N]),\\
	&\hat{\lambda}_{i,j}(\allx)\perp g_{i,j}(x)\,(j\in\mathcal{E}_i\cup\mc{I}_i,i\in[N]),\\
	&\hat{\lambda}_{i,j}(\allx)\ge0\,(j\in\mathcal{I}_i,i\in[N]).
\end{array}
\right.
\ee
In the above, the $\Theta$ is a generically chosen positive definite matrix.
The following proposition is straightforward.
\begin{prop}
\label{pr:prt}
For the GNEPP given by (\ref{eq:GNEP}),
suppose each player's optimization has a
parametric expression for their Lagrange multipliers
as in Definition~\ref{def:para:LME}.
\begin{enumerate}[(i)]

\item If (\ref{eq:KKTfeasoptprt}) is infeasible,
then the GNEP has no KKT points. If every GNE is a KKT point,
then the infeasibility of (\ref{eq:KKTfeasoptprt})
implies nonexistence of GNEs.

\item Assume the GNEP is convex.
If $(u,w)$ is a feasible point of (\ref{eq:KKTfeasoptprt}),
then $u$ is a GNE.
\end{enumerate}
\end{prop}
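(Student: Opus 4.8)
The plan is to observe that, up to the auxiliary variables $\omega$, the feasible set of \reff{eq:KKTfeasoptprt} coincides with the solution set of the parametric KKT system \reff{eq:KKTpartial}, and that by Definition~\ref{def:para:LME} this solution set is in exact correspondence with the KKT points of the GNEP. Since the objective $\lvt\allx\rvt_1^T\Theta\lvt\allx\rvt_1$ is irrelevant to both claims, everything reduces to a feasibility translation. There is no genuine obstacle here; the only care required is to keep the two directions of the biconditional in Definition~\ref{def:para:LME} apart: part~(i) uses that a critical pair produces a parameter value, while part~(ii) uses the converse, and convexity enters only in part~(ii) to promote a critical pair to a minimizer.

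For part~(i), I would argue by contraposition. Suppose the GNEP has a KKT point $x$, so that \reff{eq:KKTwithLMalp} holds with some multiplier vectors $\lambda_1,\ldots,\lambda_N$; in particular $x\in X$ and each $(x_i,\lambda_i)$ is a critical pair of $\mbox{F}_i(\xmi)$. By the forward direction of Definition~\ref{def:para:LME}, for each $i$ there is a value $w_i$ of $\omega_i$ with $\lambda_{i,j}=\hat{\lambda}_{i,j}(x,w_i)$ for all $j\in[m_i]$. I would then check directly that $\allx=(x,w_1,\ldots,w_N)$ satisfies every constraint of \reff{eq:KKTfeasoptprt}: membership $\allx\in X\times\re^s$ comes from $x\in X$; the stationarity equations and the sign conditions are exactly the first and last lines of \reff{eq:KKTwithLM}; and the per-index complementarity $\hat{\lambda}_{i,j}(\allx)\perp g_{i,j}(x)$ follows since $g_{i,j}(x)=0$ on $\mc{E}_i$ and since $\lambda_i\perp g_i(x)$ with nonnegative summands forces each term to vanish on $\mc{I}_i$. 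Thus \reff{eq:KKTfeasoptprt} is feasible, and contrapositively its infeasibility rules out any KKT point. The second assertion is then immediate: if every GNE is a KKT point, the absence of KKT points leaves no room for a GNE.

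For part~(ii), let $(u,w)$ be feasible for \reff{eq:KKTfeasoptprt}, and set $\lambda_{i,j}:=\hat{\lambda}_{i,j}(u,w_i)$ for each $i$ and $j$. Reading the constraints of \reff{eq:KKTfeasoptprt} together with $u\in X$ in reverse, these $\lambda_{i,j}$ satisfy precisely the single-player KKT system \reff{eq:KKTwithLM} at $u$; in particular the per-index complementarity together with the sign conditions recovers $\lambda_i\perp g_i(u)$. By the converse direction of Definition~\ref{def:para:LME}, each $(u_i,\lambda_i)$ is therefore a critical pair of $\mbox{F}_i(u_{-i})$. Finally I would invoke convexity: under Definition~\ref{df:cgnep} each $\mbox{F}_i(u_{-i})$ is a convex program, for which the KKT conditions are sufficient for global optimality, so every $u_i$ minimizes $\mbox{F}_i(u_{-i})$ and hence $u$ is a GNE (equivalently, one may simply cite that every KKT point of a convex GNEP is a GNE).
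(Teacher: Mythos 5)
Your proof is correct and follows the same route the paper intends: the paper declares the proposition straightforward, relying on the preceding observation that (for convex GNEPs) $x$ is a GNE if and only if some $\omega$ makes $(x,\omega)$ satisfy \reff{eq:KKTpartial}, which is exactly the feasibility translation you carry out, combined with the sufficiency of KKT conditions for convex programs. Your explicit separation of the two directions of the biconditional in Definition~\ref{def:para:LME} and the per-index versus aggregate complementarity check are just the details the paper leaves implicit.
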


\section{The polynomial optimization reformulation}
\label{sc:alg}

In this section, we give an algorithm for solving convex GNEPs.
We assume each $\lambda_i$ has either a rational or parametric expression,
as in Definition~\ref{LME:rat} or \ref{def:para:LME}.
If $\lambda_i$ has a polynomial or parametric expression,
we let $q_i(x) :=1$.
If $\lambda_i$ has a polynomial or rational expression,
then we let $s_i=0$. Recall the notation
\[
\allx \,:= \, (x,\omega_1,\ldots,\omega_N).
\]
Choose a generic positive definite matrix $\Theta$.
Then solve the following polynomial optimization
\be
\label{eq:KKTfeasopt}
\left\{
\begin{array}{rll}
\min\limits_{\allx} & \lvt\allx\rvt_1^T\Theta\lvt\allx\rvt_1 \\
\st &q_i(x)\nabla_{x_i} f_i(x)-\sum\nolimits_{j=1}^{m_i}\hat{\lambda}_{i,j}(\allx)\nabla_{x_i} g_{i,j}(x)=0 \, (i\in[N]),\\
	&\hat{\lambda}_{i,j}(\allx)\perp g_{i,j}(x)\,(j\in\mathcal{E}_i\cup\mc{I}_i,i\in[N]),\\
	&g_{i,j}(x)=0\,(j\in\mathcal{E}_i,i\in[N]),\\
	&g_{i,j}(x)\ge0\,(j\in\mathcal{I}_i,i\in[N]),\\
	&\hat{\lambda}_{i,j}(\allx)\ge0\,(j\in\mathcal{I}_i,i\in[N]).
\end{array}
\right.
\ee
If (\ref{eq:KKTfeasopt}) is infeasible,
then there are no KKT points.
Since $\Theta$ is positive definite, if (\ref{eq:KKTfeasopt}) is feasible,
then it must have a minimizer, say, $(u,w)\in X\times \re^s$.
For convex GNEPs, if $q_i(u)>0$ for all $i$, then $u$ must be a GNE.
If $q_i(u) \le 0$ for some $i$,
then $u$ may or may not be a GNE. To check this,
we solve the following optimization problem
for those $i$ with $q_i(u) \le 0$
\begin{equation}
\left\{
\begin{array}{llll}
\delta_i := & \min\limits_{\xpi}& f_i(x_i,u_{-i})-f_i(u_{i},u_{-i})&\\
 &\st  &  g_{i,j}(x_{i},u_{-i})=0 \, (j\in\mc{E}_i), \
 g_{i,j}(x_{i},u_{-i})\ge0 \, (j\in\mc{I}_i).
\end{array}
\right.
\label{eq:checkopt}
\end{equation}
This is a polynomial optimization in $x_i$.
Since $u\in X$, the point $u_i$ is feasible for (\ref{eq:checkopt}),
so $\delta_i\le 0$. If $\delta_i\ge0$ for all $i$,
then $u$ must be a GNE.
The following is an algorithm for solving the GNEP.

\begin{alg}
\label{ag:KKTSDP} \rm
For the convex GNEP given by (\ref{eq:GNEP}),
do the following:

\begin{description}

\item [Step~0]
Choose a generic positive definite matrix $\Theta$ of length $n+s+1$.

\item [Step~1]
Solve the polynomial optimization \reff{eq:KKTfeasopt}.
If it is infeasible, then there are no KKT points and stop;
otherwise, solve it for a minimizer $(u,w)$.

\item [Step~2]
If all $q_i(u) >0$, then $u$ is a GNE. Otherwise,
for those $i$ with $q_i(u) \le 0$, solve the optimization \reff{eq:checkopt}
for the minimum value $\delta_i$.
If $\delta_i \ge 0$ for all such $i$, then $u$ is a GNE;
otherwise, it is not.

\end{description}
\end{alg}

In Step~0, we can choose $\Theta = R^T R$
for a randomly generated square matrix $R$ of length $n+s+1$.
When $\Theta$ is a generic positive definite matrix,
the optimization \reff{eq:KKTfeasopt} must have a unique minimizer,
if its feasible set is nonempty.
This is shown in Theorem~\ref{thmcvg:upconv}(ii).
Since the objective $f_i(x_i,u_{-i})$ is assumed to be convex in $x_i$,
if it is bounded from below on $X_i(u_{-i})$,
then (\ref{eq:checkopt}) must have a minimizer
(see \cite[Theorem~3]{Belousov2000}).
In applications, we are mostly interested in cases that
(\ref{eq:checkopt}) has a minimizer, for the existence of a GNE.
In the subsections~\ref{sc:opt4all} and \ref{sc:opt4one},
we will discuss how to solve polynomial optimization problems in
Algorithm~\ref{ag:KKTSDP},
by the Moment-SOS hierarchy of semidefinite relaxations.
The convergence of Algorithm~\ref{ag:KKTSDP} is shown as follows.

\begin{thm}
\label{tm:KKTSDPconv}
For the convex GNEPP given by (\ref{eq:GNEP}),
suppose each Lagrange multiplier vector $\lambda_i$ has a rational expression
as in Definition~\ref{LME:rat}
or a parametric expression as in Definition~\ref{def:para:LME}.

\begin{enumerate}[(i)]

\item If $(u,w)$ is a feasible point of (\ref{eq:KKTfeasopt})
such that $q_i(u)>0$ for all $i$, then $u$ is a GNE.

\item Assume every GNE is a KKT point.
If (\ref{eq:KKTfeasopt}) is infeasible, then the GNEP has no GNEs.
If $\Theta$ is positive definite and every
$q_i(x)>0$ for all feasible $x$ of (\ref{eq:KKTfeasopt}),
then Algorithm~\ref{ag:KKTSDP} will find a GNE if it exists.
\end{enumerate}
\end{thm}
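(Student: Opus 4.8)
The plan is to derive every claim from the optimality characterizations already in hand, namely Propositions~\ref{pr:rtn} and \ref{pr:prt} together with the fact that every KKT point of a convex GNEP is a GNE \cite[Theorem~4.6]{Facchinei2010}. For part~(i), I would show that a feasible point $(u,w)$ of \reff{eq:KKTfeasopt} with $q_i(u)>0$ for all $i$ is a KKT point of the GNEP. For each $i$ set $\lambda_{i,j} := \hat{\lambda}_{i,j}(\allx)/q_i(u)$ evaluated at $\allx = (u,w)$; recall that in the rational case $q_i$ is the denominator and $s_i=0$, while in the parametric case $q_i\equiv 1$ and $\omega_i$ is the parameter. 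Dividing the stationarity equation, the complementarity relations, and the sign conditions of \reff{eq:KKTfeasopt} by the positive scalar $q_i(u)$ turns them into exactly the corresponding conditions of the KKT system \reff{eq:KKTwithLMalp}, while the feasibility $u\in X$ supplies the remaining primal constraints $g_{i,j}(u)=0$ and $g_{i,j}(u)\ge 0$. Thus $u$ is a KKT point, and by convexity $u$ is a GNE. This is precisely Proposition~\ref{pr:rtn}(ii) and Proposition~\ref{pr:prt}(ii) read off the unified formulation \reff{eq:KKTfeasopt}.

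For the first assertion of part~(ii), I would argue by contrapositive. If $x$ is any KKT point, then \reff{eq:KKTwithLMalp} holds for some multipliers $\lambda_i$; multiplying the stationarity and complementarity relations by $q_i(x)\ge 0$ on $X$ and using $\hat{\lambda}_{i,j}(x)=q_i(x)\lambda_{i,j}$ in the rational case, or choosing the parameter value $\omega_i=\lambda_i$ guaranteed by Definition~\ref{def:para:LME} in the parametric case, yields a feasible point of \reff{eq:KKTfeasopt}. Hence infeasibility of \reff{eq:KKTfeasopt} forces the GNEP to have no KKT points, and under the hypothesis that every GNE is a KKT point, no GNEs.

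For the second assertion of part~(ii), suppose a GNE exists. By hypothesis it is a KKT point, so by the previous paragraph \reff{eq:KKTfeasopt} is feasible and the algorithm does not terminate at Step~1. The key remaining step is the existence of a minimizer: since $\Theta\succ 0$, the objective $[\allx]_1^T\Theta[\allx]_1$ is a coercive quadratic in $\allx$, so its sublevel sets are bounded, and because the feasible set of \reff{eq:KKTfeasopt} is closed (being cut out by polynomial equalities and inequalities) the intersection with any sublevel set is compact; Weierstrass then supplies a minimizer $(u,w)$. By the standing assumption that $q_i(x)>0$ at every feasible point we have $q_i(u)>0$ for all $i$, so part~(i) applies and $u$ is a GNE, which the algorithm reports in Step~2. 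I expect the existence-of-minimizer step to be the only nonroutine point, and it rests entirely on the coercivity furnished by $\Theta\succ 0$ together with closedness of the polynomial feasible set.
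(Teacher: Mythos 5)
Your proposal is correct and follows essentially the same route as the paper: part (i) is read off from Propositions~\ref{pr:rtn} and \ref{pr:prt}, the infeasibility claim in (ii) comes from the fact that every KKT point yields a feasible point of \reff{eq:KKTfeasopt}, and the final claim follows from the existence of a minimizer plus part (i). The only difference is that you spell out why a minimizer exists (coercivity of $[\allx]_1^T\Theta[\allx]_1$ for $\Theta\succ 0$ together with closedness of the feasible set), a step the paper asserts without elaboration.
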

\begin{proof}
(i) This is directly implied by Propositions~\ref{pr:rtn} and \ref{pr:prt}.

(ii) If (\ref{eq:KKTfeasopt}) is infeasible,
then there is no GNE, because every GNE is assumed to be a KKT point
and it must be feasible for (\ref{eq:KKTfeasopt}).
Next, assume (\ref{eq:KKTfeasopt}) is feasible.
Since $\Theta$ is positive definite,
the optimization (\ref{eq:KKTfeasopt}) has a minimizer,
say, $(u,w)$. By the given assumption,  we have $q_i(u)>0$ for all $i$.
So $u$ is a GNE, by (i).
\end{proof}

\begin{remark}
For convex GNEPs, we can choose not to
use nontrivial expressions for Lagrange multipliers,
i.e., we consider the polynomial optimization (\ref{eq:KKTfeasopt})
with $s_i=m_i$ and $\lambda_{i,j}=\omega_{i,j}$ for all $i$ and $j$.
By doing this, we can get an algorithm like Algorithm~\ref{ag:KKTSDP} to get GNEs.
However, this approach is usually very inefficient computationally,
because it results in more variables for
the polynomial optimization (\ref{eq:KKTfeasopt}).
Note that when Lagrange multiplier expressions (LMEs) are not used,
each Lagrange multiplier is treated as a new variable.
Moreover, solving (\ref{eq:KKTfeasopt}) without LMEs
may require higher order Moment-SOS relaxations.
This is shown in numerical experiments in Section~\ref{sc:opt4all}.
In Example~\ref{ep:fstepinsc6}(i-ii),
we compare the performance of Algorithm~\ref{ag:KKTSDP}
with and without LMEs.
Computational results show the advantage of using them.
\end{remark}

In Theorem~\ref{tm:KKTSDPconv}(ii),
if $q_i(x)>0$ for all $x\in X$, then we must have
$q_i(x)>0$ for all feasible $x$ of (\ref{eq:KKTfeasopt}).
Suppose $(u,w)$ is a computed minimizer of (\ref{eq:KKTfeasopt}).
If $u$ is not a GNE, i.e., $\delta_i<0$ for some $i$,
we can let $\mc{N} \subseteq [N]$ be the labeling set of $i$ with $\delta_i<0$.
By Theorem~\ref{tm:KKTSDPconv}, we know $q_i(u)=0$ for all $i \in \mc{N}$.
For a priori small $\varepsilon >0$,
we can add the inequalities $q_i(x)\ge\varepsilon \, (i \in \mc{N})$
to the optimization (\ref{eq:KKTfeasopt}), to exclude $u$ from the feasible set.
Then we solve the following new optimization
\be
\label{eq:KKTfeasoptadd}
\left\{
\begin{array}{cll}
\min\limits_{\allx\in X\times\re^s} & \lvt\allx\rvt_1^T\Theta\lvt\allx\rvt_1 \\
\st &q_i(x)\nabla_{x_i} f_i(x)-\sum\nolimits_{j=1}^{m_i}\hat{\lambda}_{i,j}(\allx)\nabla_{x_i} g_{i,j}(x)=0 \, (i\in[N]),\\
	&\hat{\lambda}_{i,j}(\allx)\perp g_{i,j}(x)\,(j\in\mathcal{E}_i\cup\mc{I}_i,i\in[N]),\\
	&\hat{\lambda}_{i,j}(\allx)\ge0\,(j\in\mathcal{I}_i,i\in[N]),\\
	&q_i(x)\ge \varepsilon \, (i\in\mc{N}).
\end{array}
\right.
\ee
If $\varepsilon >0$ is not small enough, the constraint $q_i(x)\ge\varepsilon$
may also exclude some GNEs.
If the new optimization (\ref{eq:KKTfeasoptadd}) is infeasible,
one can heuristically get a candidate GNE
by choosing a different generic positive definite $\Theta$ in (\ref{eq:KKTfeasopt}).
In computational practice, when a GNE exists,
it is very likely that we can get one by doing this.
However, how to detect nonexistence of GNEs when
(\ref{eq:KKTfeasopt}) is feasible can be theoretically difficult.
The theoretical side of this problem is mostly open,
to the best of the authors' knowledge.

\subsection{The optimization for all players}
\label{sc:opt4all}

We discuss how to solve the polynomial optimization problems in Algorithm~\ref{ag:KKTSDP},
by using the Moment-SOS hierarchy of semidefinite relaxations
\cite{Las01,lasserre2015introduction,LasICM,Lau09,LauICM}.
We refer to the notation in subsections \ref{ssc:poly} and \ref{ssc:locmat}.

First, we discuss how to solve the optimization \reff{eq:KKTfeasopt}.
Denote the polynomial tuples
\begin{multline}
 \label{poly:Phi:ith}
\Phi_i := \Big \{q_i(x)\nabla_{x_i} f_i(\allx)
  -\sum_{j=1}^{m_i}\hat{\lambda}_{i,j}(\allx)\nabla_{x_i} g_{i,j}(x) \Big \}
\cup \Big \{ g_{i,j}(x): j \in \mc{E}_i \Big  \}  \\
 \cup \Big \{\hat{\lambda}_{i,j}(\allx)\cdot g_{i,j}(x): j \in \mc{I}_i \Big  \} ,
\end{multline}
\begin{multline}
\label{poly:Psi:ith}
\Psi_i : =  \Big \{ g_{i,j}(x): j \in \mc{I}_i \Big  \} \cup
\Big \{\hat{\lambda}_{i,j}(\allx): j \in \mc{I}_i \Big  \}.\qquad\qquad\qquad\qquad\qquad\qquad
\end{multline}
For notational convenience, for a vector $p = (p_1,\ldots, p_s)$,
the set $\{ p \}$ stands for $\{p_1, \ldots, p_s\}$, in the above.
Denote the unions
\[
\Phi \, :=\bigcup_{i=1}^N \Phi_i,  \quad
\Psi \,:=\,\bigcup_{i=1}^N \Psi_i.
\]
They are both finite sets of polynomials.
Then, the optimization (\ref{eq:KKTfeasopt})
can be equivalently written as
\be  \label{eq:rewtupper}
\left\{
\baray{rl}
\vartheta_{\min} :=   \min\limits_{\allx} &
     \theta(\allx) := [\allx]_1^T \Theta [\allx]_1  \\
\st &  p(\allx) = 0 \,\, (\forall \, p \in \Phi),   \\
    &  q(\allx) \ge 0 \,\, (\forall \, q \in \Psi).
\earay
\right.
\ee
Denote the degree
\[
d_0 \, := \, \max\{\lceil\deg(p)/2\rceil: \, p \in \Phi \cup \Psi \}.
\]
For a degree $k \ge d_0$, consider the $k$th order
moment relaxation for solving \reff{eq:rewtupper}
\be
\label{eq:d-mom}
\left\{
\baray{rl}
\vartheta_k  \, :=  \,\min\limits_{ y } & \lip \theta,y \rip\\
 \st & y_0=1,\, L_{p}^{(k)}[y] = 0 \, (p \in \Phi), \\
  & M_k[y] \succeq 0,\, L_{q}^{(k)}[y] \succeq 0 \, (q \in \Psi),  \\
  &  y \in \mathbb{R}^{\mathbb{N}^{n+s}_{2k}}.
\earay
\right.
\ee
Its dual optimization problem is the $k$th order SOS relaxation
\be
\label{eq:d-sos}
\left\{
\baray{ll}
\max & \gamma\\
\st & \theta -\gamma \in  \idl[\Phi]_{2k}+\qmod[\Psi]_{2k}. \\
\earay
\right.
\ee
For relaxation orders $k=d_0, d_0+1, \ldots$,
we get the Moment-SOS hierarchy of semidefinite relaxations
\reff{eq:d-mom}-\reff{eq:d-sos}.
This produces the following algorithm for solving
the polynomial optimization problem \reff{eq:rewtupper}.

\begin{alg} \label{ag:upsdp} \rm
Let $\theta,\Phi,\Psi$ be as in \reff{eq:rewtupper}.
Initialize $k := d_0$.
\begin{description}
\item [Step~1]
Solve the semidefinite relaxation (\ref{eq:d-mom}). If it is infeasible,
then \reff{eq:rewtupper} has no feasible points and stop;
otherwise, solve it for a minimizer $y^*$.

\item [Step~2]
Let $\allu=(u,w): = (y^*_{e_1}, \ldots, y^*_{e_{n+s}})$.
If $\allu$ is feasible for \reff{eq:rewtupper}
and $\vartheta_k = \theta(u)$,
then $\allu$ is a minimizer of \reff{eq:rewtupper}.
Otherwise, let $k := k+1$ and go to Step~1.
\end{description}
\end{alg}

In the Step~2, $e_i$ denotes the labeling vector such that
its $i$th entry is $1$ while all other entries are $0$.
For instance, when $n=s=2$,  $y_{e_3} = y_{0010}$.
The optimization \reff{eq:d-mom} is a relaxation of \reff{eq:rewtupper}.
This is because if $\allx$ is a feasible point of \reff{eq:rewtupper},
then $y = [\allx]_{2k}$ must be feasible for \reff{eq:d-mom}.
Hence, if \reff{eq:d-mom} is infeasible,
then \reff{eq:rewtupper} must be infeasible,
which also implies the nonexistence of KKT points.
Moreover, the optimal value $\vartheta_k$ of \reff{eq:d-mom}
is a lower bound for the minimum value of \reff{eq:rewtupper},
i.e., $\vartheta_k \leq \theta(\allx)$ for all $\allx$
that is feasible for \reff{eq:rewtupper}.
In the Step~2, if $\allu$ is feasible for \reff{eq:rewtupper}
and $\vartheta_k = \theta(\allu)$,
then $\allu$ must be a minimizer of \reff{eq:rewtupper}.
The Algorithm~\ref{ag:upsdp} can be implemented in {\tt GloptiPoly} \cite{GloPol3}.
The convergence of Algorithm~\ref{ag:upsdp} is shown as follows.

\begin{thm} \label{thmcvg:upconv}
Assume the set $\idl[\Phi]+\qmod[\Psi] \subseteq \re[\allx]$ is archimedean.
\bit

\item [(i)]
If \reff{eq:rewtupper} is infeasible,
then the moment relaxation \reff{eq:d-mom}
must be infeasible when the order $k$ is big enough.

\item [(ii)]
Suppose \reff{eq:rewtupper} is feasible and
$\Theta$ is a generic positive definite matrix.
Then \reff{eq:rewtupper} has a unique minimizer.
Let $\allu^{(k)}$ be the point $\allu$ produced in the Step~2 of
Algorithm~\ref{ag:upsdp} in the $k$th loop.
Then $\allu^{(k)}$ converges to the unique minimizer of \reff{eq:rewtupper}.
In particular, if the real zero set of $\Phi$ is finite,
then $\allu^{(k)}$ is the unique minimizer of \reff{eq:rewtupper},
when $k$ is sufficiently large.

\eit
\end{thm}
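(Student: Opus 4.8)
The plan is to treat the two parts separately, in each case reducing to the standard behaviour of the Moment-SOS hierarchy that the archimedean hypothesis unlocks; throughout I write $K := \mc{Z}(\Phi)\cap\mc{S}(\Psi)$ for the feasible set of \reff{eq:rewtupper}, which is compact by archimedeanness. For part (i), I would argue directly that no feasible moment sequence can exist once $k$ is large. If \reff{eq:rewtupper} is infeasible then $K=\emptyset$, so the constant $-1$ is vacuously positive on $K$; since $\idl[\Phi]+\qmod[\Psi]$ is archimedean, Putinar's Positivstellensatz yields $-1\in\idl[\Phi]+\qmod[\Psi]$, and hence a representation $-1=\sum_{p\in\Phi}\phi_p\,p+\sigma_0+\sum_{q\in\Psi}\sigma_q\,q$ with SOS $\sigma_0,\sigma_q$ and every summand of degree at most $2k$ for some fixed $k$. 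Pairing this identity with any $y$ feasible for \reff{eq:d-mom} at order $k$ produces a contradiction: the left side equals $-y_0=-1$, whereas on the right the equality terms vanish because $L_p^{(k)}[y]=0$ and the remaining terms are nonnegative because $M_k[y]\succeq0$ and $L_q^{(k)}[y]\succeq0$. Thus \reff{eq:d-mom} has no feasible point for this $k$ and all larger ones, proving (i).

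For part (ii) I would proceed in three stages. First, uniqueness of the minimizer: since $\theta(\allx)=\langle\Theta,[\allx]_1[\allx]_1^T\rangle$, minimizing $\theta$ over the compact $K$ is the same as minimizing the linear functional indexed by $\Theta$ over the compact image $C:=\{[\allx]_1[\allx]_1^T:\allx\in K\}$. By the classical convex-geometry genericity fact, for $\Theta$ outside a set of measure zero the supporting hyperplane in direction $\Theta$ touches $\mathrm{conv}(C)$ at a single exposed point, which is necessarily extreme and therefore lies in $C$ itself; because $\allx\mapsto[\allx]_1[\allx]_1^T$ is injective, this gives a unique minimizer $\allx^*$ of \reff{eq:rewtupper}. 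Second, asymptotic convergence: the archimedean hypothesis gives $\vartheta_k\uparrow\vartheta_{\min}$ and a uniform bound $R-\|\allx\|^2\in\idl[\Phi]+\qmod[\Psi]$ that bounds the entries of every feasible moment sequence of \reff{eq:d-mom}. Letting $y^{(k)}$ be an optimizer of \reff{eq:d-mom}, a diagonal argument extracts a subsequence converging entrywise to a sequence $y^*$ satisfying all moment and localizing conditions at every order; by Putinar's moment theorem $y^*$ is the moment sequence of a probability measure $\mu$ on $K$ with $\int_K\theta\,d\mu=\vartheta_{\min}$, so $\mu$ is supported on the set of minimizers. Third, uniqueness forces $\mu=\delta_{\allx^*}$, whence the degree-one coordinates converge and $\allu^{(k)}\to\allx^*$; since the same limit is forced along every subsequence, the whole sequence $\allu^{(k)}$ converges to $\allx^*$. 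For the finite-convergence addendum, a finite real zero set of $\Phi$ makes the relevant ideal zero-dimensional, so the hierarchy attains $\vartheta_{\min}$ at a finite order (Laurent's finite-variety result); the optimal $y^{(k)}$ then represents exactly $\delta_{\allx^*}$ and yields $\allu^{(k)}=\allx^*$ for all large $k$.

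The step I expect to be the main obstacle is the genericity of the unique minimizer. It is not enough to know that a generic linear functional has a unique minimizer over a convex body; one must verify that the exposing functional actually meets the \emph{non-convex} image $C$, which works precisely because a uniquely exposed point of $\mathrm{conv}(C)$ is extreme and extreme points of the convex hull of a compact set belong to the set. Once uniqueness is secured, the convergence of $\allu^{(k)}$ becomes the now-routine compactness-plus-Putinar extraction of the limiting Dirac measure, and the finite-convergence statement follows from the zero-dimensionality of the ideal.
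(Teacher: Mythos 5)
Your proof is correct and follows the same overall skeleton as the paper's: Putinar's Positivstellensatz applied to the constant $-1$ for part (i), and for part (ii) a genericity argument for uniqueness of the minimizer followed by the standard compactness-plus-Putinar extraction of the limiting Dirac measure, with the finite-real-variety result of Lasserre--Laurent--Rostalski for the finite-convergence addendum. The two places where your route genuinely differs are worth noting. First, in (i) the paper passes through the dual: since $\theta$ is itself SOS, $-1\in\idl[\Phi]_{2k}+\qmod[\Psi]_{2k}$ makes the SOS relaxation unbounded from above, and weak duality then forces infeasibility of the moment relaxation; your direct pairing of the Putinar certificate with a feasible $y$ is morally identical, though one should be slightly careful that $L_p^{(k)}[y]=0$ only annihilates products $p\cdot m$ with $\deg(m)\le 2(k-\lceil \deg(p)/2\rceil)$, a bookkeeping issue that the weak-duality phrasing absorbs automatically (and that disappears after increasing $k$ by one). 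Second, and more substantively, for the uniqueness of the minimizer the paper simply cites a result on generic linear objectives over the cone of tms's admitting $K$-representing measures, whereas you reprove that fact from scratch: almost-everywhere differentiability of the support function of $\mathrm{conv}(C)$ gives a unique exposed minimizer for generic $\Theta$, exposed points are extreme, extreme points of the convex hull of a compact set lie in the set, and injectivity of $\allx\mapsto[\allx]_1[\allx]_1^T$ pulls this back to a unique minimizer of the polynomial problem. Your self-contained argument is correct (and the restriction to the full-dimensional cone of positive definite $\Theta$ does not affect the measure-zero exceptional set); what it buys is independence from the cited proposition, at the cost of a longer proof of what is, in the paper, a one-line citation.
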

\begin{proof}
(i) If \reff{eq:rewtupper} is infeasible,  the constant polynomial $-1$
can be viewed as a positive polynomial on the feasible set of \reff{eq:rewtupper}.
Since $\idl[\Phi]+\qmod[\Psi]$ is archimedean,
we have $-1 \in  \idl[\Phi]_{2k}+\qmod[\Psi]_{2k}$,
for $k$ big enough, by the Putinar Positivstellensatz \cite{putinar1993positive}.
For such a big $k$, the SOS relaxation \reff{eq:d-sos} is unbounded from above,
hence the moment relaxation \reff{eq:d-mom} must be infeasible.

(ii) When the optimization \reff{eq:rewtupper} is feasible,
it must have a unique minimizer, say, $\allx^*$.
To see this,
let $\theta$ be defined as in \reff{eq:rewtupper},
$K$ be the feasible set of \reff{eq:rewtupper},
and $\mc{R}_2(K)$ be the set of tms's in $\re^{\N^n_2}$
admitting K-representing measures.
Consider the linear conic optimization problem
\be\label{eq:momopt}
\left\{ \begin{array}{ll}
\min  & \lip\theta, y\rip\\
\st & y_{0}=1, y\in\mc{R}_2(K).
\end{array} \right.
\ee
If $\Theta$ is generic in the cone of positive definite matrice,
the objective $\lip\theta, y\rip$ is a generic linear function in $y$.
By~\cite[Proposition~5.2]{nie2014moment},
the optimization (\ref{eq:momopt}) has a unique minimizer.
The minimum value of (\ref{eq:momopt}) is equal to $\vartheta_{\min}$.
Therefore, (\ref{eq:rewtupper}) has a unique minimizer when $\Theta$ is generic.
The convergence of $\allu^{(k)}$ to $\allx^*$ is shown in
\cite{Swg05} or \cite[Theorem~3.3]{nie2013certifying}.
For the special case that $\Phi(\allx)=0$ has finitely many real solutions,
the point $\allu^{(k)}$ must be equal to $\allx^*$,
when $k$ is large enough.
This is shown in \cite{lasserre2008semidef}
(also see \cite{nie2013polynomial}).
\end{proof}

The archimedeaness of the set $\idl[\Phi]+\qmod[\Psi]$
is essentially requiring that the feasible set of \reff{eq:rewtupper} is compact.
The archimedeaness is sufficient but not necessary for Algorithm~\ref{ag:upsdp} to converge.
Even if the archimedeaness fails to hold,
Algorithm~\ref{ag:upsdp} is still applicable for solving (\ref{eq:KKTfeasopt}).
If the point $\allu^{(k)}$ is feasible and $\vartheta_k=\theta(\allu^{(k)})$,
then $\allu^{(k)}$ must be a minimizer of (\ref{eq:KKTfeasopt}),
regardless of the archimedeaness holds or not.
Moreover, without archimedeaness,
the infeasibility of (\ref{eq:d-mom}) still implies that (\ref{eq:KKTfeasopt}) is infeasible.
In our computational practice,
Algorithm~\ref{ag:upsdp} almost always has finite convergence.

The polynomial optimization (\ref{eq:KKTfeasoptadd})
can be solved in the same way by the Moment-SOS hierarchy of semidefinite relaxations.
The convergence property is the same. For the cleanness of this paper,
we omit the details.

\subsection{Checking Generalized Nash Equilibria}
\label{sc:opt4one}

Suppose $\allu=(u,w)\in\re^n\times\re^s$ is a minimizer of \reff{eq:KKTfeasopt}.
For convex GNEPPs, if all $q_i(u)>0$,
then $u$ is a GNE, by Theorem~\ref{tm:KKTSDPconv}(i).
If $q_i (u) \le 0$ for some $i$,
we need to solve the optimization \reff{eq:checkopt},
to check if $u=(u_i,u_{-i})$ is a GNE or not,
Note that \reff{eq:checkopt} is a convex polynomial optimization problem in $x_i$.
For given $u_{-i}$, if it is bounded from below,
then (\ref{eq:checkopt}) achieves its optimal value at a minimizer.

Consider the $i$th player's optimization with $q_i (u) \le 0$.
For notational convenience, we denote the polynomial tuples
\begin{multline} \label{polylow:Hi(v)}
H_i(u) := \big \{g_{i,j}(x_i,u_{-i}): j \in \mc{E}_i \big \} \cup
\big \{  \hat{\lambda}_{i,j}(\xpi,u_{-i})\cdot g_{i,j}(x_i,u_{-i}): j \in \mc{I}_i \big \}  \\
\cup \big \{ q_i(\xpi,u_{-i})\nabla_{x_i}f_i(\xpi, u_{-i}) -
 \sum_{j=1}^{m_i} \hat{\lambda}_{i,j}(\xpi, u_{-i} ) \nabla_{x_i} g_{i,j}(x_i,u_{-i}) \big \} ,
\end{multline}
\be  \label{polylow:Gi(v)}
J_i(u) := \big \{g_{i,j}(\xpi, u_{-i} ): j \in \mc{I}_i \big \} \cup
\big \{  \hat{\lambda}_{i,j}(\xpi,u_{-i}) : j \in \mc{I}_i \big \}   .
\ee
Like in \reff{poly:Phi:ith}-\reff{poly:Psi:ith},
the set $\{ p \}$ stands for $\{p_1, \ldots, p_s\}$,
when $p = (p_1,\ldots, p_s)$ is a vector of polynomial.
The sets $H_i(u), J_i(u)$ are finite collections.

Under some suitable constraint qualification conditions (e.g., the Slater's Condition),
when (\ref{eq:checkopt}) has a minimizer,
it is equivalent to
\be  \label{eq:rewtlower}
\left\{
\baray{rrl}
\eta_i:= &\min\limits_{x_i \in \re^{n_i} }  &
    \zeta_i(\xpi):=f_i(\xpi, u_{-i})-f_i(u_i, u_{-i}) \\
&\st & p(\xpi)=0 \, (p \in H_i(u) ), \\
    && q(\xpi) \ge 0  \, (q \in J_i(u) ).
\earay
\right.
\ee
Denote the degree in variables $\xpi$ for its constraining polynomials
\begin{multline}   \label{hfdg:di:ith}
d_{i}:=\max \big \{ \lceil \deg(\zeta_i(\xpi, u_{-i}))/2,\deg(p(\xpi))/2, \\
 \deg(q(\xpi))/2: p \in H_i(u), q \in J_i(u)   \rceil\big \}.
\end{multline}
For a degree $k \ge d_{i}$, the $k$th order moment relaxation for
\reff{eq:rewtupper} is
\be
\label{eq:d-momlow}
\left\{
\baray{rrl}
\eta_i^{(k)}:=&\min\limits_{ y } & \lip \zeta_i(\xpi), y \rip\\
&\st & y_0=1,\, L_{p}^{(k)}[y] = 0 \, (p \in H_i(u)),  \\
 & & M_k[y] \succeq 0,\, L_{q}^{(k)}[y] \succeq 0 \, (q \in J_i(u)), \\
 & &  y\in\mathbb{R}^{\mathbb{N}^{n_i}_{2k}}.
\earay
\right.
\ee
The dual optimization problem of (\ref{eq:d-momlow}) is the $k$th order SOS relaxation
\be
\label{eq:d-soslow}
\left\{
\baray{ll}
\max & \gamma\\
\st & \zeta_i(\xpi) - \gamma \in
        \idl[H_i(u)]_{2k}+\qmod[J_i(u)]_{2k}. \\
\earay
\right.
\ee
By solving the above relaxations for $k =d_i, d_i+1, \ldots$,
we get the Moment-SOS hierarchy of relaxations
\reff{eq:d-momlow}-\reff{eq:d-soslow}.
This gives the following algorithm.

\begin{alg} \label{ag:lowsdp} \rm
For a minimizer $u = (u_i, u_{-i})$ of \reff{eq:KKTfeasopt} with $q_i(u) \le 0$,
solve the $i$th player's optimization \reff{eq:rewtlower}. Initialize $k := d_i$.
\begin{description}

\item [Step~1]
Solve the moment relaxation (\ref{eq:d-momlow})
for the minimum value $\eta_i^{(k)}$ and a minimizer $y^*$.
If $\eta_i^{(k)}\ge0$,
then $\eta_i=0$ and stop;
otherwise, go to the next step.

\item [Step~2]
Let $t:=d_i$ as in \reff{hfdg:di:ith}.
If $y^*$ satisfies the rank condition
\be \label{eq:flatranklow}
 \Rank{M_t[y^*]} \,=\, \Rank{M_{t-d_i}[y^*]} ,
\ee
then extract a set $U_i$ of
 $r :=\Rank{M_t(y^*)}$ minimizers for \reff{eq:rewtlower} and stop.

\item [Step~3]
If \reff{eq:flatranklow} fails to hold and $t < k$,
let $t := t+1$ and then go to Step~2;
otherwise, let $k := k+1$ and go to Step~1.

\end{description}
\end{alg}

We would like to remark that the optimization \reff{eq:rewtlower}
is always feasible, because $u_i$ is a feasible point
since $u$ is a minimizer of \reff{eq:KKTfeasopt}.
The moment relaxation (\ref{eq:d-momlow}) is also feasible.
Because $\eta_i^{(k)}$ is a lower bound for $\eta_i$,
and $\eta_i\le \zeta_i(u_i,u_{-i})=0$,
if $\eta_i^{(k)}\ge0$, then $\eta_i$ must be $0$.
In Step~2, the rank condition~\reff{eq:flatranklow} 
is called \textit{flat truncation} \cite{nie2013certifying}.
It is a sufficient (and almost necessary) condition to check convergence
of moment relaxations.
When \reff{eq:flatranklow} holds, the method in \cite{HenLas05}
can be used to extract $r$ minimizers for \reff{eq:rewtlower}.
The Algorithm~\ref{ag:lowsdp} can also be implemented in {\tt GloptiPoly} \cite{GloPol3}.
If $\idl[H_i(u)]+\qmod[J_i(u)]$ is archimedean,
then $\eta_i^{(k)}\to\eta_i$ as $k\to\infty$ \cite{Las01}.
It is interesting to remark that
\[
I_1:=\idl[g_{i,j}(x_i,u_{-i}): j \in \mc{E}_i] \subseteq \idl[H_i(u)],
\]
\[
I_2 := \qmod[g_{i,j}(x_i,u_{-i}): j \in \mc{I}_i] \subseteq \qmod[J_i(u)] .
\]
If $I_1 + I_2$ is archimedean, then $\idl[H_i(u)]+\qmod[J_i(u)]$
must also be archimedean.
Furthermore, we have the following convergence theorem
for Algorithm~\ref{ag:lowsdp}.

\begin{remark}
To check the flat truncation (\ref{eq:flatranklow}),
we need to evaluate the ranks of $M_t[y^*]$ and $M_{t-d_i}[y^*]$.
Evaluating matrix ranks is a classical problem in numerical linear algebra.
When a matrix is near to be singular, it may be difficult to
determine its rank accurately, due to round-off errors.
In computational practice, we often determine the rank of a matrix
as the number of its singular values larger than a tolerance (say, $10^{-6}$).
We refer to \cite{Demmel} for determining matrix ranks numerically.
Moreover, when \reff{eq:rewtlower} has a unique minimizer,
the ranks of $M_t[y^*]$ and $M_{t-d_i}[y^*]$ are one,
the flat truncation (\ref{eq:flatranklow})
is relatively easy to check by looking at the largest singular value.
\end{remark}

\begin{thm}  \label{thmcvg:lowopt}
For the convex polynomial optimization \reff{eq:checkopt},
assume its optimal value is achieved at a KKT point.
If either one of the following conditions hold,
\bit

\item [(i)]
The set $I_1+I_2$ is archimedean, and the Hessian
$\nabla_{x_i}^2\zeta_i(x_i^*,u_{-i}) \succ 0$
for a minimizer $x_i^*$ of \reff{eq:rewtlower};
{\rm\textbf{or}}
\item [(ii)]
The real zero set of polynomials in $H_i(u)$ is finite,
\eit
then Algorithm~\ref{ag:lowsdp} must terminate within
finitely many loops.
\end{thm}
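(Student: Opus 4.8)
The plan is to show that, under either hypothesis, the Moment--SOS hierarchy \reff{eq:d-momlow}--\reff{eq:d-soslow} for \reff{eq:rewtlower} has finite convergence together with the flat truncation \reff{eq:flatranklow} at some order. Granting this, termination of Algorithm~\ref{ag:lowsdp} is automatic: since $u_i$ is feasible for \reff{eq:rewtlower} we always have $\eta_i\le 0$, so finite convergence gives $\eta_i^{(k)}=\eta_i$ at some finite $k$; if $\eta_i=0$ the test in Step~1 succeeds, while if $\eta_i<0$ the flat truncation in Step~2 extracts the minimizers. Thus the entire burden is to certify finite convergence.

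For part~(i), I would first use the inclusions $I_1\subseteq\idl[H_i(u)]$ and $I_2\subseteq\qmod[J_i(u)]$ recorded before the theorem to lift the archimedeaness of $I_1+I_2$ to $\idl[H_i(u)]+\qmod[J_i(u)]$, which is the standing hypothesis for the finite-convergence theory. Next I would convert the Hessian condition into a second-order sufficiency statement. Because \reff{eq:checkopt} is convex, the Hessian at $x_i^*$ of the Lagrangian $\zeta_i-\sum_j\lambda_{i,j}g_{i,j}$ equals $\nabla_{x_i}^2\zeta_i(x_i^*,u_{-i})-\sum_j\lambda_{i,j}\nabla_{x_i}^2 g_{i,j}$, in which the $\mc{E}_i$-terms vanish (affine constraints) and the $\mc{I}_i$-terms are positive semidefinite (concave $g_{i,j}$ with $\lambda_{i,j}\ge 0$); hence it dominates $\nabla_{x_i}^2\zeta_i(x_i^*,u_{-i})\succ 0$, so the second-order sufficient condition holds. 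Convexity together with $\nabla_{x_i}^2\zeta_i(x_i^*,u_{-i})\succ 0$ also forces $x_i^*$ to be the unique global minimizer of \reff{eq:checkopt}, since a second minimizer would make $\zeta_i$ affine along the joining segment, contradicting the positive definiteness. With archimedeaness and these optimality conditions in hand, the finite-convergence results for polynomial optimization with Lagrange-multiplier/KKT reformulations \cite{nie2014optimality,Nie2019} give $\eta_i^{(k)}=\eta_i$ for $k$ large, and the flat truncation \reff{eq:flatranklow} then holds by \cite{nie2013certifying}.

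For part~(ii) the argument is shorter. When the real zero set of the equality polynomials in $H_i(u)$ is finite, the associated ideal has finite real variety, so the feasible set of \reff{eq:rewtlower} is finite and the finite-convergence theory for polynomial optimization over finite real varieties \cite{lasserre2008semidef,nie2013polynomial} applies, yielding $\eta_i^{(k)}=\eta_i$ for $k$ large with \reff{eq:flatranklow} satisfied. Hence Algorithm~\ref{ag:lowsdp} terminates within finitely many loops in this case as well.

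The step I expect to be the main obstacle is the transfer of optimality conditions in part~(i). The reformulation \reff{eq:rewtlower} carries the complementarity products $\hat{\lambda}_{i,j}\cdot g_{i,j}$ and the stationarity equation among its constraints, and such KKT-type systems generically fail the linear independence constraint qualification at $x_i^*$, because the gradient of a product $\hat{\lambda}_{i,j}g_{i,j}$ degenerates whenever strict complementarity is violated. The remedy I would pursue is to phrase the finite-convergence hypotheses in terms of the underlying convex program \reff{eq:checkopt} rather than the augmented system: under convexity every feasible point of \reff{eq:rewtlower} is a KKT point, hence a global minimizer of \reff{eq:checkopt}, so the uniqueness of $x_i^*$ collapses the feasible set to the single point $\{x_i^*\}$, and the positive definiteness of $\nabla_{x_i}^2\zeta_i(x_i^*,u_{-i})$ supplies exactly the nondegeneracy needed to invoke the finite-convergence theorem for the LME-augmented hierarchy. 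Making this reduction rigorous, that is, reconciling the constraint qualification for \reff{eq:rewtlower} with the clean conditions available on \reff{eq:checkopt}, is the delicate point.
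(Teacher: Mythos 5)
Your overall termination logic and your part (ii) are fine and agree with the paper (finite real zero set of $H_i(u)$, then \cite{lasserre2008semidef,nie2013polynomial,nie2013certifying}). The genuine gap is in part (i), and it is precisely the step you flag yourself as "the delicate point." You propose to get finite convergence by applying the general finite-convergence theory of \cite{nie2014optimality,Nie2019} to the LME-augmented problem \reff{eq:rewtlower}; but those theorems require constraint qualification, strict complementarity and second-order sufficiency for the \emph{augmented} system, and, as you correctly observe, the complementarity products $\hat{\lambda}_{i,j}\cdot g_{i,j}$ typically destroy LICQ there. Your proposed remedy does not close this: the claim that the feasible set of \reff{eq:rewtlower} collapses to $\{x_i^*\}$ is unjustified in the rational case, where a feasible point with $q_i(x_i,u_{-i})=0$ need not be a KKT point of \reff{eq:checkopt}; and even granting the collapse, you still have no theorem whose hypotheses you have verified. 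So the argument for (i) is not complete.

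The missing idea is to avoid imposing any optimality conditions on the augmented system. Since \reff{eq:checkopt} is convex, $I_1+I_2$ is archimedean, and $\nabla_{x_i}^2\zeta_i(x_i^*,u_{-i})\succ 0$, the convex-optimization result of de Klerk and Laurent \cite[Corollary~3.3]{Klerk2011} gives directly that $\zeta_i-\eta_i\in I_1+I_2$, i.e., a degree-bounded certificate in the quadratic module of the \emph{original} constraints. The inclusion $I_1+I_2\subseteq\idl[H_i(u)]+\qmod[J_i(u)]$, which you invoked only to transfer archimedeanness, is then used to transfer the certificate itself: $\zeta_i-\eta_i\in\idl[H_i(u)]_{2k}+\qmod[J_i(u)]_{2k}$ for all $k$ large enough, so the SOS relaxation \reff{eq:d-soslow} attains $\eta_i$ at a finite order and $\eta_i^{(k)}=\eta_i$ by duality. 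Your auxiliary observations --- that the Lagrangian Hessian dominates $\nabla_{x_i}^2\zeta_i$ by convexity, and that the minimizer is unique --- are correct and useful (the latter for flat truncation when $\eta_i<0$), but they feed into this de Klerk--Laurent route rather than substituting for it.
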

\begin{proof}
Since its optimal value is achieved at a KKT point,
the optimization problem~\reff{eq:checkopt}
is equivalent to \reff{eq:rewtlower}.

(i)
If $I_1+I_2$ is archimedean and $\nabla_{x_i}^2\zeta_i(x_i^*,u_{-i})\succ0$ if
$x_i^*$ is a minimizer of \reff{eq:rewtlower},
then $\zeta_i(x_i)-\eta_i\in I_1+I_2$, by \cite[Corollary~3.3]{Klerk2011}.
Since \[ I_1+I_2\subseteq \idl[H_i(u)]+\qmod[J_i(u)], \]
we have $\zeta_i(x_i)-\eta_i\in \idl[H_i(u)]_{2k}+\qmod[J_i(u)]_{2k}$
for all $k$ big enough.
Therefore, Algorithm~\ref{ag:lowsdp}
must terminate within finitely many loops, by the duality theory.

(ii)
If the real zero set of polynomials in $H_i(u)$ is finite,
then the conclusion is implied by \cite[Theorem~1.1]{nie2013polynomial} and \cite[Theorem~2.2]{nie2013certifying}.
\end{proof}

\begin{remark}
If the objective polynomial in (\ref{eq:checkopt})
is SOS-convex and its constraining ones are SOS-concave
(see \cite{Helton2010} for the definition of SOS-convex polynomials),
then Algorithm~\ref{ag:lowsdp} must terminate in the first loop (see \cite{Las09}).
If the optimal value of \reff{eq:checkopt} is not achieved at a KKT point,
the classical Moment-SOS hierarchy of semidefinite relaxations
can be used to solve it. We refer to
\cite{Klerk2011,Las09,Las01,lasserre2015introduction,LasICM,Lau09,LauICM}
for the work for solving general polynomial optimization.
\end{remark}

\section{Numerical experiments}
\label{sc:ne}

In this section, we apply Algorithm~\ref{ag:KKTSDP} to solve convex GNEPs.
To use it, we need Lagrange multiplier expressions.
This can be done as follows.

\begin{itemize}

\item When polynomial expressions exist, we always use them.
In particular, we use polynomial expressions for the first player
of the GNEP given by \reff{eq:firstep},
the first player in Example~\ref{ep:fstepinsc6}(ii),
the third player in Examples~\ref{ep:A8} and \ref{ep:hybrid}(i-ii),
the production unit and market players in Example~\ref{ep:market}.

\item We use rational expressions for all players in
Examples~\ref{eq:rational_example_insc6}, \ref{ep:jointballinsc6} and \ref{ep:num_lmd}.
Moreover, rational expressions are used for the second player of the GNEP given by \reff{eq:firstep}, the first two players in Examples~\ref{ep:A8}
and \ref{ep:hybrid}(i-ii),
and the consumer players in Example~\ref{ep:market}.
For Example~\ref{ep:num_lmd}, the rational expression is obtained by
solving (\ref{eq:findL}) numerically.

\item
When it is difficult to find convenient polynomial or rational expressions,
we use parametric expressions for Lagrange multipliers.
For all players in Examples~\ref{ep:partialepinsc6}, \ref{ep:A3},
we use parametric expressions.
\end{itemize}

We apply the software {\tt GloptiPoly~3} \cite{GloPol3}
and {\tt SeDuMi} \cite{sturm1999using} to solve
the Moment-SOS relaxations for the polynomial optimization
(\ref{eq:rewtupper}) and (\ref{eq:rewtlower}).
We use the software {\tt YALMIP} for solving (\ref{eq:findL}).
The computation is implemented in an Alienware Aurora R8 desktop,
with an Intel\textsuperscript \textregistered \
Core(TM) i7-9700 CPU at 3.00GHz$\times $8 and 16GB of RAM,
in a Windows 10 operating system.
For neatness of the paper, only four decimal digits are shown
for the computational results.

In Step~2 of Algorithm~\ref{ag:KKTSDP},
if the optimal values $\delta_i \ge 0$ for each $i$ such that $q_i(u)\le0$,
then the computed minimizer of (\ref{eq:KKTfeasopt}) is a GNE.
In numerical computations,
we may not have $\delta_i \ge 0$ exactly due to round-off errors.
Typically, when $\delta_i$ is near zero, say, $\delta_i \ge -10^{-6}$,
we regard the computed solution as an accurate GNE.
In the following, all the GNEPs are convex.

\begin{exm}\rm
\label{ep:fstepinsc6}
(i) For the GNEP given by \reff{eq:firstep},
the first player has a polynomial expression for Lagrange multipliers given by (\ref{eq:1stlmd}),
and the second player has a rational expression given as
\[
\lambda_{2,1}=\frac{-x_2^T\nabla_{x_2}f_2}{2q_2(x)},\quad q_2(x)=x_1^Tx_1.
\]
For each $i$, the $q_i(x)>0$ for all $x\in X$.
We ran Algorithm~\ref{ag:KKTSDP} and obtained the GNE $u = (u_1, u_2)$ with
\be \label{eq:1stGNE}
u_1 \approx (0.7274,0.7274,0.7274),\quad
u_2 \approx (0.4582,0.4582,0.4582).
\ee
It took around $2.83$ seconds.\\
(ii) If the first player's objective is changed to
\[
f_1(x) = (x_{2,1}+x_{2,2}-2x_{2,3})(x_{1,1}+x_{1,2}-2x_{1,3})^2+x_{1,1}+x_{1,2}-2x_{1,3},
\]
then the GNEP has no GNE, detected by Algorithm~\ref{ag:KKTSDP}.
It took around $70.31$ seconds to detect the nonexistence.
The matrix polynomials $G_1(x)$ and $G_2(x)$ are nonsingular on $X$,
so all GNEs must be KKT points if they exist.
\end{exm}

In the following, we compare the performance of Algorithm~\ref{ag:KKTSDP}
with the method of solving the optimization \reff{eq:KKTfeasopt}
without using Lagrange multiplier expressions,
i.e., each Lagrange multiplier is treated as a new variable for polynomials.
The comparison for Example~\ref{ep:fstepinsc6}(i)
is given in Table~\ref{tab:ep6.1_hassol}.
The computational results for the method using Lagrange multiplier expressions
(i.e., for Algorithm~\ref{ag:KKTSDP})
are given in the column labeled ``Algorithm~\ref{ag:KKTSDP}".
The results for the method without using Lagrange multiplier expressions
are given in the column labeled ``Without LME".
In the rows, the value $k$ is the relaxation order for solving (\ref{eq:KKTfeasopt}).
The subcolumn ``time'' lists the consumed time (in seconds)
for solving the moment relaxation of order $k$,
and the subcolumn ``GNE" shows if a GNE is obtained or not.
When $k=2$ for Algorithm~\ref{ag:KKTSDP},
the degree of relaxation is less than appearing polynomials,
so we display that ``not applicable (n.a.)".
\begin{table}[htb]
\centering
\caption{The computational results for Example~\ref{ep:fstepinsc6}(i).}
\label{tab:ep6.1_hassol}
\begin{tabu}{|c|c|c|c|c|}  \hline
      & \multicolumn{2}{|c|}{Algorithm~\ref{ag:KKTSDP}}  &
             \multicolumn{2}{|c|}{Without LME} \\ \hline
      & time & GNE  & time & GNE \\ \hline
$k=2$ &  n.a. & n.a.  & 4.03 & no \\ \hline
$k=3$ & 2.83 & yes  & 1350.09 & no \\ \hline
\end{tabu}
\end{table}
\noindent
For Example~\ref{ep:fstepinsc6}(ii),
the comparison is given in Table~\ref{tab:ep6.1(ii)}.
This GNEP does not have a GNE. When no LMEs are used,
the $4$th order moment relaxation cannot be solved due to out of memory.
However this can be done by using nontrivial LMEs.
\begin{table}[htbp]
\centering
\caption{The computational results for Example~\ref{ep:fstepinsc6}(ii).}
\label{tab:ep6.1(ii)}
\begin{tabu}{|c|c|c|c|c|}  \hline
      & \multicolumn{2}{|c|}{Algorithm~\ref{ag:KKTSDP}}  &
                \multicolumn{2}{|c|}{Without LME} \\ \hline
      & time & nonexistence of GNE  & time & nonexistence of GNE \\ \hline
$k=2$ &  n.a. & n.a.  & 3.67 & not detected \\ \hline
$k=3$ &  2.77 & not detected  & 1201.75 & not detected \\ \hline
$k=4$ &  70.31 & detected  & \multicolumn{2}{|c|}{out of memory} \\ \hline
\end{tabu}
\end{table}

\begin{exm}\rm
\label{ep:A8insec6}
Consider the GNEP in Example~\ref{ep:A8}.
We use Lagrange multiplier expressions given there.
By Algorithm~\ref{ag:KKTSDP},
we obtained a feasible point $\hat{u}\approx10^{-4}\cdot ( 0.1274, 0.4102, 0.3219)$ of (\ref{eq:KKTfeasopt}) with
$q_1(\hat{u})\approx 0.1274\cdot10^{-4}$ and $q_2(\hat{u})\approx0.4102\cdot10^{-4}$.
We solved (\ref{eq:checkopt}), for $i=1,2$, to check if $\hat{u}$ is a GNE or not,
and got $\delta_1\approx -1.0000$, $\delta_2\approx -1.8996\cdot 10^{-10}$.
Therefore, we solved (\ref{eq:KKTfeasoptadd}) with $\mc{N}=\{1\}$ and $\varepsilon=0.1$,
and obtained a GNE $u = (u_1, u_2, u_3)$ with
\[
u_1 \approx  0.5000,\ u_2 \approx 0.5000,\ u_3 \approx 0.7500, \ q_1(u)\approx q_2(u)\approx0.1250.
\]
It took around $0.89$ second.
\end{exm}

\begin{exm}\rm
\label{eq:rational_example_insc6}
Consider the GNEP in Example~\ref{ep:>0} with objectives
\[
f_1(x)=\sum\limits_{j=1}^2(x_{1,j}-1)^2+x_2(x_{1,1}-x_{1,2}),\
f_2(x)=(x_2)^3-x_{1,1}x_{1,2}x_2-x_2.
\]
The rational expressions for both players are given by (\ref{eq:lmd_>0}).
For each $i$, the $q_i(x)>0$ for all $x\in X$.
We ran Algorithm~\ref{ag:KKTSDP} and got the GNE $u = (u_1, u_2)$ with
\[
u_1 \approx (0.4897,1.0259), \, u_2 \approx 0.7077.
\]
It took around 0.20 second.
\end{exm}

\begin{exm}\rm
\label{ep:jointballinsc6}
Consider the GNEP in Example~\ref{ep:jointball} with objectives
\[
f_1(x)=10x_1^Tx_2-\sum_{j=1}^3x_{1,j},\
f_2(x)= \sum_{j=1}^3(x_{1,j}x_{2,j})^2+(3\prod_{j=1}^3x_{1,j}-1)\sum_{j=1}^3x_{2,j}.
\]
We use rational expressions as in (\ref{eq:lmd_ball}).
From Example~\ref{ep:jointball},
we know all feasible points of (\ref{eq:KKTfeasopt}) are GNEs.
By Algorithm~\ref{ag:KKTSDP}, we got the GNE $u = (u_1, u_2)$ with
\[
u_1 \approx (0.9864,0.0088,0.0088),\quad
u_2 \approx (0.0836,0.0999,0.0999).
\]
It took around 2.03 seconds.
\end{exm}

\begin{exm}\rm
\label{ep:partialepinsc6}
Consider the GNEP in Example~\ref{ep:partialep} with objectives
\[
\begin{array}{l}
f_1(x)=x_{2,1}(x_{1,1})^3+(x_{1,2})^3 -\sum\nolimits_{j=1}^2x_{1,j}\cdot \sum\nolimits_{j=1}^2x_{2,j},\\
f_2(x)= (x_{1,1}+x_{1,2})(x_{2,1})^3-3x_{2,1}+(x_{2,2})^2+x_{1,1}x_{1,2}x_{2,2}.
\end{array}
\]
We use parametric expressions as in (\ref{eq:lmd_partep}).
For each $i$, the $q_i(x)>0$ for all $x\in X$.
By Algorithm~\ref{ag:KKTSDP}, we got the GNE $u = (u_1, u_2)$ with
\[
u_1 \approx (0.6475,0.2786), \quad u_2 \approx (1.0391,-0.0902).
\]
It took around 63.97 seconds. 
\end{exm}

\begin{exm} \rm  \label{ep:num_lmd}
Consider the $2$-player GNEP
\[
\begin{array}{cllcl}
    \min\limits_{x_{1} \in \re^2 }& (x_{1,1})^2+2(x_{1,2})^2 &\vline&
    \min\limits_{x_{2} \in \re^2 }& \Vert x_1\Vert^2\cdot\Vert x_2\Vert^2+3x_1^Tx_2 \\
    &\qquad\quad+3\sum\nolimits_{j=1}^2x_{1,j}(x_{2,j})^2&\vline&
    &\qquad\qquad\qquad\qquad +x_{2,1}-x_{2,2}\\
    \st & x_{1,1}+2x_{1,2}-x_{2,1}\le1,&\vline& \st &(x_{2,1})^2+x_{1,2}x_{2,1}\le2,\\
    &(x_{1,2})^2+(x_{2,1})^2\le3,   &\vline &&(x_{1,1})^2+(x_{2,2})^2\le3,\\
    &x_{1,1}\ge0,&\vline& &x_{2,2}\ge0.\\
\end{array}
\]
We solve (\ref{eq:findL}) numerically for $i=1,2$ with $ v =(0,0,0,0), d=2$
to get rational expressions for $\lambda_i$'s.
By Algorithm~\ref{ag:KKTSDP}, we got the GNE $u = (u_1, u_2)$ with
\[
\begin{array}{ll}
u_1 \approx (0.0000,-1.3758), & \quad q_1(u) \approx 6.7538;\\
u_2 \approx (-0.2641,1.3544), & \quad q_2(u) \approx 2.3227.
\end{array}
\] 
It took around $0.41$ second in solving $(\ref{eq:findL})$ for both players,
and $6.40$ seconds to find the GNE. For neatness of the paper,
we do not display Lagrange multiplier expressions obtained by solving (\ref{eq:findL}).
\end{exm}

\begin{exm}\rm
\label{ep:hybrid}
(i) Consider the 3-player GNEP
\[
\mbox{1st player:}
\left\{\begin{array}{ll}
\min\limits_{x_1\in\re^2} &
x_{2,1}(x_{1,1})^2+x_{2,2}(x_{1,2})^2-(x_{3,1})^2x_{1,1}-(x_{3,2})^2x_{1,2}\\
\st&x_1^Tx_1\le1+x_2^Tx_2;
\end{array}\right.
\]
\[
\mbox{2nd player:}
\left\{\begin{array}{ll}
\min\limits_{x_2\in\re^2} &
(x_{2,1})^3+(x_{2,2})^3-x_{1,1}x_{2,1}x_{3,1}-x_{1,2}x_{2,2}x_{3,2}\\
\st&x_{2,1}+x_{2,2}\le 1+x_3^Tx_3, \, x_{2,1}\ge0,\, x_{2,2}\ge0;\\ 
\end{array}\right.
\]
\[
\mbox{3rd player:}
\left\{\begin{array}{ll}
\min\limits_{x_3\in\re^2} &
\big(\sum_{i=1}^3(x_{i,1}+x_{i,2}) \big)^2-x_{3,1}-x_{3,2}\\
\st&x_{3,1}\ge x_{1,1}, \, x_{3,2}\ge x_{1,2}.
\end{array}\right.\]
The first player's Lagrange multipliers have a rational expression,
that
\[
\lambda_1=\frac{-x_1^T\nabla_{x_1}f_1}{2q_1(x)},\quad
q_1(x)=1+x_2^Tx_2.
\]
For the second player, we let $q_2(x)=1+x_3^Tx_3$,
and there exists a rational expression for $\lambda_2$, that
\[
\lambda_{2,1}=\frac{-x_2^T\nabla_{x_2}f_2}{q_2(x)},\quad
\lambda_{2,2}=\frac{\partial f_2}{\partial x_{2,1}}+\lambda_{2,1},\quad
\lambda_{2,3}=\frac{\partial f_2}{\partial x_{2,2}}+\lambda_{2,1}.
\] 
For $\lambda_3$,
we use the polynomial expression that
\[
\lambda_{3,1}=\frac{\partial f_3}{\partial x_{3,1}},\quad
\lambda_{3,2}=\frac{\partial f_3}{\partial x_{3,2}}.
\]
For each $i$, the $q_i(x)>0$ for all $x\in X$.
By Algorithm~\ref{ag:KKTSDP}, we got the GNE $u = (u_1, u_2, u_3)$ with
\[
\begin{array}{c}
u_1 \approx (0.1097,0.0750),\
u_2 \approx (0.0663,0.0458),\
u_3 \approx (0.1205,0.0828). 
\end{array}
\] 
It took around $3.23$ seconds. \\ 
\noindent
(ii) If the third player's objective function becomes
\[
\baray{c}
\big(\sum_{i=1}^3 (x_{i,1}-x_{i,2}) \big)^2-x_{3,1}-x_{3,2},
\earay
\]
then Algorithm~\ref{ag:KKTSDP} took around $2.86$
seconds to detect nonexistence of GNEs.
Note that all the matrix polynomials $G_i(x)\,(i=1,\dots,3)$ are nonsingular on $X$,
so all GNEs must be KKT points if they exist.
\end{exm}

\begin{exm}\rm
\cite[Example A.3]{FacKan10}
\label{ep:A3}
Consider the GNEP of $3$ players. 
For $i=1,2,3$, the $i$th player aims to minimize the quadratic function
$$f_i(x)=\frac{1}{2}x_i^TA_ix_i+x_i^T(B_i\xmi+b_i).$$
All variables have box constraints $-10\le x_{i,j}\le 10$, for all $i,j$.
In addition to them, the first player has linear constraints $x_{1,1}+x_{1,2}+x_{1,3}\le 20,\,x_{1,1}+x_{1,2}-x_{1,3}\le x_{2,1}-x_{3,2}+5$;
the second player has $x_{2,1}-x_{2,2}\le x_{1,2}+x_{1,3}-x_{3,1}+7$;
and the third player has $x_{3,2}\le x_{1,1}+x_{1,3}-x_{2,1}+4.$
The values of parameters are set as follows
\[
\begin{array}{cccc}
A_1=\lvt\begin{array}{ccc}
20&5&3\\5&5&-5\\3&-5&15
\end{array}\rvt,\
A_2=\lvt\begin{array}{ccc}
11&-1\\-1&9
\end{array}\rvt,\
A_3=\lvt\begin{array}{ccc}
48&39\\39&53
\end{array}\rvt,\\
B_1=\lvt\begin{array}{cccc}
-6&10&11&20\\10&-4&-17&9\\15&8&-22&21
\end{array}\rvt,\
B_2=\lvt\begin{array}{ccccc}
20&1&-3&12&1\\10&-4&8&16&21
\end{array}\rvt,\\
B_3=\lvt\begin{array}{ccccc}
10&-2&22&12&  16  \\
9&19&21&-4&20
\end{array}\rvt,\
b_1=\lvt\begin{array}{ccc}
1\\-1\\1
\end{array}\rvt,\
b_2=\lvt\begin{array}{ccc}
1\\0
\end{array}\rvt,\
b_3=\lvt\begin{array}{ccc}
-1\\2
\end{array}\rvt.
\end{array}\]
We use parametric expressions for Lagrange multipliers as in \reff{eq:boxlmd}.
It is clear $q_i(x)=1$ for all $x\in X$ and for all $i=1,2,3.$
By Algorithm~\ref{ag:KKTSDP}, we got the GNE $u = (u_1, u_2, u_3)$ with
\[
\begin{array}{c}
u_1 \approx (-0.3805,-0.1227,-0.9932), \quad
u_2 \approx (0.3903,1.1638),\\ u_3 \approx (0.0504,0.0176).
\end{array}\]
It took around 8.18 seconds.
\end{exm}

\begin{exm}\rm
\label{ep:market}
Consider the GNEP based on the Arrow and Debreu model of a competitive economy \cite{arrow1954existence, FacKan10}.
The first $N_1$ players are consumers,
the second $N_2$ players are production units,
and the last player is the market, so $N=N_1+N_2+1$.
In this GNEP, each player has $n_1=\dots=n_N$ variables.
Let $Q_i\in\re^{n_i\times n_i},b_i\in\re^{n_i},\xi_i\in\re^{n_i}_+$ and $a_{i,k}\in\re_+$ be parameters.
These players' optimization problems are:
\[\mbox{The $i$th player (a consumer):}\left\{
\begin{array}{ll}
\min\limits_{x_i\in\re^{n_i}_+}&\frac{1}{2}x_i^TQ_ix_i-b_i^Tx_i\\
\st   & x_{N}^Tx_i\le x_{N}^T\xi_i+\sum_{k=N_1+1}^{N-1}a_{i,k}x_{N}^Tx_k .
\end{array}\right.\]
\[\mbox{The $i$th player (a production unit):}\left\{
\begin{array}{ll}
\min\limits_{x_i\in\re^{n_i}_+}&-x_{N}^Tx_i\\
\st   &  x_i^Tx_i\le i-N_1 .
\end{array}\right. \qquad \qquad \qquad \]
\[\mbox{The $N$th player (the market):}\left\{
\begin{array}{ll}
\min\limits_{x_N\in\re^{n_i}_+}&x_N^T\left(
\sum_{k=N_1+1}^{N-1}x_k-\sum_{k=1}^{N_1}(x_k-\xi_k)\right)\\
\st   & \sum_{j=1}^{{n_i}}x_{N,j}=1 .
\end{array}\right.\]
For each $i\in[N_1]$, the Lagrange multipliers have rational expressions as
\[
\lambda_{i,1}=\frac{-x_i^T\nabla_{x_i}f_i}{q_i(x)},\
\lambda_{i,j}=\frac{\partial f_i}{\partial x_{i,j}}+x_{N,j}\cdot
\lambda_{i,1}\,(j=1,\ldots,{n_i}),
\]
where $q_i(x)=x_{N}^T\xi_i+\sum_{k=N_1+1}^{N-1}a_{i,k}x_{N}^Tx_k>0$ for all $x\in X$.
For each $i = N_1+1, \ldots, N_1+N_2$,
the $i$th player (a production unit) has polynomial expressions
\[
\lambda_{i,1}=\frac{-x_i^T\nabla_{x_i}f_i}{2(i-N_1)},\ \lambda_{i,j}
=\frac{\partial f_i}{\partial x_{i,j}}+2x_{i,j}\cdot\lambda_{i,1}\,(j=1,\ldots,n_i).
\]
For the last player (the market),
we substitute $x_{N,n_i}$ by $1-\sum_{j=1}^{n_i-1}x_{N,j}$,
then the constraints become
$1-\sum_{j=1}^{n_i-1}x_{N,j}\ge0,\, x_{N,1}\ge0,\ldots, x_{N,n_i-1}\ge0,$ and hence
\[
\lambda_{N,1}=-\sum\nolimits_{j=1}^{n_i-1}\frac{\partial f_N}{\partial x_{N,j}}\cdot x_{N,j},\ \lambda_{N,j+1}=\frac{\partial f_N}{\partial x_{N,j}}+\lambda_{N,1}\,(j=1,\ldots,n_i-1).
\]
For each $i=1,\dots,N_1$, when $n_i=2$, the parameters are given as
\[\begin{array}{lccc}
Q_i=\lvt\begin{array}{ccc}
0.75+0.25i&1.5-0.5i\\ 1.5-0.5i &i
\end{array}\rvt,\quad
b_i=\lvt\begin{array}{ccc}
0.4+0.1i\\0.9+0.1i
\end{array}\rvt, \quad \xi_i=\lvt\begin{array}{ccc}
i\\i
\end{array}\rvt, \\
a_{i,j}=0.2+0.1i\quad (j=N_1+1,\dots, N_1+N_2).
\end{array}\]
When $n_i=3$,
the parameters are given as:
\[\begin{array}{lccc}
Q_i=\lvt\begin{array}{ccc}
-1+2i&-i&i\\-i&1+i&1-i\\i&1-i&1+i
\end{array}\rvt,\quad
b_i=\lvt\begin{array}{ccc}
0.4+0.1i\\0.9+0.1i\\ 1.4+0.1i
\end{array}\rvt, \quad \xi_i=\lvt\begin{array}{ccc}
i\\i\\i
\end{array}\rvt, \\
a_{i,j}=0.2+0.1i\quad (j=N_1+1,\dots, N_1+N_2).
\end{array}\]
The numerical results are presented in Table~\ref{tab:market}.
The ``$N$'' is the total number of all players,
the ``$N_1$'' and ``$N_2$'' are the number of consumers and production units respectively,
the ``$n$'' (resp., ``$n_i$'') is the dimension of ``$x$'' (resp., ``$x_i$''),
the ``$u$" is the GNE obtained by Algorithm~\ref{ag:KKTSDP},
the ``$q(u)$" gives the value of the
denominator vector $q(u):=(q_1(u) \ddd q_{N_1}(u))$,
and ``time'' shows the consumed time (in seconds).
\begin{table}[htb]
\renewcommand{\arraystretch}{1}
\centering
\caption{Numerical results of Example~\ref{ep:market}}
\label{tab:market}
\begin{tabular}{|c|c|c|c|c|c|c|c|c|c|c|}  \hline
$\begin{array}{c}\mbox{Number}\\\mbox{of}\\\mbox{players} \end{array}$ & dimension & $u$ & $q(u)$ & time \\ \hline
$\begin{array}{l}N\ =5\\N_1=2\\N_2=2 \end{array}$ & $\begin{array}{l}n\ =10\\n_i=2 \end{array}$ & $\begin{array}{l}
(0.0000,1.0000,\quad 0.2889,0.4778,\\ \ 0.4166,0.9091,\quad 0.5892,1.2856,\\ \qquad\qquad  0.3143,0.6857)\end{array}$ & $\begin{array}{c} 1.4907\\2.6543\end{array}$ & 1.37\\ \hline
$\begin{array}{l}N\ =6\\N_1=3\\N_2=2 \end{array}$ & $\begin{array}{l}n\ =12\\n_i=2 \end{array}$ & $\begin{array}{l}
(0.0000,1.0000,\quad 0.2889,0.4778,\\ \ 0.4667,0.4000,\quad 0.4354,0.9002,\\ \ 0.6157,1.2731,\quad 0.3260,0.6740)\end{array}$ & $\begin{array}{c}1.5423\\2.7230\\3.9038\end{array}$ & 3.82\\ \hline
$\begin{array}{l}N\ =7\\N_1=3\\N_2=3 \end{array}$ & $\begin{array}{l}n\ =14\\n_i=2 \end{array}$& $\begin{array}{l}
(0.0000,1.0000,\quad 0.2889,0.4778,\\ \ 0.4667,0.4000,\quad 0.5587,0.8294,\\ \ 0.7901,1.1729,\quad 0.9677,1.4365,\\ \qquad\qquad 0.4025,0.5975)\end{array}$ &$\begin{array}{c} 1.8961\\3.1948\\4.4935\end{array}$ & 21.26\\ \hline
$\begin{array}{l}N\ =8\\N_1=4\\N_2=3 \end{array}$ & $\begin{array}{l}n\ =16\\n_i=2 \end{array}$& $\begin{array}{l}
(0.0000,1.0000,\quad 0.2889,0.4778,\\ \ 0.4667,0.4000,\quad 0.5704,0.3963,\\ \ 0.5835,0.8121,\quad 0.8251,1.1485,\\ \ 1.0106,1.4067,\quad 0.4181,0.5819)\end{array}$ & $\begin{array}{c} 1.8913\\3.1884\\4.4855\\5.7826\end{array} $ & 106.78\\ \hline
$\begin{array}{l}N\ =9\\N_1=4\\N_2=4 \end{array}$ & $\begin{array}{l}n\ =18\\n_i=2 \end{array}$& $\begin{array}{l}
(0.0000,1.0000,\quad 0.2889,0.4778,\\ \ 0.4667,0.4000,\quad 0.5704,0.3963,\\ \ 0.6258,0.7800,\quad 0.8850,1.1031,\\ \ 1.0838,1.3510,\quad 1.2515,1.5600,\\ \qquad\qquad 0.4451,0.5549)\end{array}$ & $\begin{array}{c}
2.3116\\3.7489\\5.1861\\6.6233
\end{array} $ & 465.71\\ \hline
$\begin{array}{l}N\ =3\\N_1=1\\N_2=1 \end{array}$ & $\begin{array}{l}n\ =9\\n_i=3 \end{array}$& $\begin{array}{l}
(1.3076,1.0871,0.0962, \\ \ 0.8087,0.5882,0.0000, \\ \ 0.5789
,0.4211,0.0000)
\end{array}$ & $1.2148$ & 0.50\\ \hline
$\begin{array}{l}N\ =4\\N_1=2\\N_2=1 \end{array}$ & $\begin{array}{l}n\ =12\\n_i=3 \end{array}$& $\begin{array}{l}
(1.3696,1.0886,0.0652, \\ \ 0.3500,0.7875,0.5625, \\ \ 0.6245,0.7810,0.0000, \\ \ 0.4443,0.5557,0.0000)
\end{array}$ & $\begin{array}{c}1.2134\\2.2846\end{array}$ & 3.76\\ \hline
$\begin{array}{l}N\ =5\\N_1=2\\N_2=2 \end{array}$ & $\begin{array}{l}n\ =15\\n_i=3 \end{array}$& $\begin{array}{l}
(1.7172,1.3109,0.0000, \\ \ 0.3500,0.7875,0.5625, \\ \ 0.7006,0.7135,0.0097, \\ \ 0.9908,1.0091,0.0006, \\ \ 0.4953,0.5044,0.0003)
\end{array}$ & $\begin{array}{c}1.5121\\2.6829\end{array}$ &42.66\\ \hline
$\begin{array}{l}N\ =6\\N_1=3\\N_2=2 \end{array}$ & $\begin{array}{l}n\ =18\\n_i=3 \end{array}$& $\begin{array}{l}
(1.7734,1.3398,0.0000, \\ \ 0.3500,0.7875,0.5625, \\ \ 0.2250,0.7958,0.6542, \\ \ 0.5780,0.8160,0.0001, \\ \ 0.8174,1.1541,0.0040,\\ \ 0.4146,0.5854,0.0000)\end{array}$ & $\begin{array}{c}1.5192\\2.6923\\3.8653\end{array}$ & 473.84\\ \hline
\end{tabular}

\end{table}
\end{exm}

\subsection{Comparison with other methods}

We compare our method (i.e., Algorithm~\ref{ag:KKTSDP})
with some classical methods for solving convex GNEPPs, such as
the two-step method in \cite{Han2012} based on Quasi-variational formulation,
the penalty method in \cite{FacKan10},
the exact version of interior point method
based on the KKT system in \cite{dreves2011solution},
and the Augmented-Lagrangian method in \cite{kanzow2016}.
All examples in Section~\ref{sc:ne} are tested for comparisions.
For Example~\ref{ep:market}, we test for the case that $N_1=N_2=1,n_i=3$.

For a computed tuple $u := (u_1, \ldots, u_N)$, we use the value
\[
\xi \,:= \, \max \big \{\max_{i\in[N],j\in\mc{I}_i}\{-g_{i,j}(u)\},
\max_{i\in[N],j\in\mc{E}_i}\{|g_{i,j}(u)|\} \big\}
\]
to measure the feasibility violation.
Clearly, the point $u$ is feasible if and only if $\xi \le 0$.
If we solve (\ref{eq:checkopt}) for all $i\in[N]$,
the accuracy parameter of $u$ is $\delta:=\max_{i\in[N]}\vert\delta_i\vert$.
For these methods, we use the following stopping criterion:
For each time we get a new iterate $u$,
if its feasibility violation $\xi <10^{-6}$,
then we compute the accuracy parameter $\dt$.
If $\delta<10^{-6}$, then we stop the iteration.

For these classical methods, the parameters are the same as given in \cite{Han2012,FacKan10,dreves2011solution,vonHeusinger2009,kanzow2016}.
When implementing the QVI method, 
we use Moment-SOS relaxations 
to find projections into given sets
(the maximum number of iterations for line search is set to be $100$).
For the penalty method, the MATLAB function {\tt fsolve}
is used to implement the Levenberg-Marquardt Algorithm
for solving all equations involved
(the maximum number of iterations is set to be $100$).
The full penalization is used when we implement the Augmented-Lagrangian method,
and a Levenberg-Marquardt type method (see \cite[Algorithm~24]{kanzow2016})
is exploited to solve penalized subproblems.
We let $1000$ be the maximum number of iterations for the QVI method, 
let $1000$ be the maximum number of outer iterations
for the penalty method and the Augmented-Lagrangian method,
and let $10,000$ be the maximum number of iterations for the interior point method.
For initial points, we use $(1,0,0,1,0,0)$ for Example~\ref{ep:fstepinsc6}(i-ii),
$(0,0,0,0,0,0,0,0,1)$ for Example~\ref{ep:market},
and the zero vectors for other GNEPs.
If the maximum number of iterations is reached
but the stopping criterion is not met,
we still solve the (\ref{eq:checkopt}) to check if
the latest iterating point is a GNE or not.

\begin{table}[htbp]
\renewcommand{\arraystretch}{1.20}
\centering
\caption{Comparison with some methods}
\label{tab:comparison}
\begin{tabular}{|c|c|c|c|c|c|c|c|c|c|c|c|}  \hline
\multicolumn{2}{|c|}{Example}         & QVI  & Penalty     & IPM & A-L & Alogrithm~\ref{ag:KKTSDP} \\ \hline
\multirow{2}{*}{\ref{ep:fstepinsc6}(i)} & time &\multirow{2}{*}{Fail} & \multirow{2}{*}{Fail} & \multirow{2}{*}{Fail}  & \multirow{2}{*}{Fail}  & 2.83 \\ \cline{2-2}\cline{7-7}
& error & & & & & $4\cdot 10^{-9}$ \\ \hline
\multirow{2}{*}{\ref{ep:fstepinsc6}(ii)}
& time &\multirow{2}{*}{Fail} & \multirow{2}{*}{Fail} & \multirow{2}{*}{Fail}  & \multirow{2}{*}{Fail}  & 70.31 \\ \cline{2-2}\cline{7-7}
& error & & & & & no GNE \\ \hline
\multirow{2}{*}{\ref{ep:A8insec6}}
& time  &\multirow{2}{*}{Fail} & 3.45 & 0.19 & \multirow{2}{*}{Fail}  & 0.89 \\ \cline{2-2}\cline{4-5}\cline{7-7}
& error & & $2\cdot 10^{-6}$ & $3\cdot 10^{-7}$ & & $7\cdot 10^{-7}$ \\ \hline
\multirow{2}{*}{\ref{eq:rational_example_insc6}}
& time  & 2.63 & 8.46 & 0.12 & 0.08 & 0.20 \\ \cline{2-7}
& error & $8\cdot 10^{-7}$ & $3\cdot 10^{-6}$ & $2\cdot 10^{-7}$ & $2\cdot 10^{-7}$ & $1\cdot 10^{-8}$ \\ \hline
\multirow{2}{*}{\ref{ep:jointballinsc6}}
& time  & \multirow{2}{*}{Fail} & 4.51 & 0.29 & \multirow{2}{*}{Fail} & 2.03 \\ \cline{2-2}\cline{4-5}\cline{7-7}
& error &  & $3\cdot 10^{-5}$ & $8\cdot 10^{-7}$ &  & $4\cdot 10^{-7}$ \\ \hline
\multirow{2}{*}{\ref{ep:partialepinsc6}}
& time  & 185.29 & 4.02 & 37.7 & 0.03 & 63.97 \\ \cline{2-7}
& error & $9\cdot 10^{-5}$ & $2\cdot 10^{-6}$ & $5\cdot 10^{-4}$ &$3\cdot 10^{-7}$ & $4\cdot 10^{-7}$ \\ \hline
\multirow{2}{*}{\ref{ep:num_lmd}}
& time  & 7.78 & \multirow{2}{*}{Fail} & 0.17 & \multirow{2}{*}{Fail} & 6.40 \\ \cline{2-3}\cline{5-5}\cline{7-7}
& error & $6\cdot 10^{-7}$ &  & $3\cdot 10^{-7}$ & & $1\cdot 10^{-7}$ \\ \hline
\multirow{2}{*}{\ref{ep:hybrid}(i)}
& time  & 72.18 & 0.39 & 0.16 & 0.05 & 3.23 \\ \cline{2-7}
& error & $4\cdot 10^{-7}$ & $8\cdot 10^{-8}$ & $5\cdot 10^{-7}$ & $1\cdot 10^{-10}$ & $7\cdot 10^{-9}$ \\ \hline
\multirow{2}{*}{\ref{ep:hybrid}(ii)}
& time &\multirow{2}{*}{Fail} & \multirow{2}{*}{Fail} & \multirow{2}{*}{Fail}  & \multirow{2}{*}{Fail}  & 2.86 \\ \cline{2-2}\cline{7-7}
& error & & & & & no GNE \\ \hline
\multirow{2}{*}{\ref{ep:A3}}
& time &\multirow{2}{*}{Fail} & 0.38 & 0.16  & 0.01  & 8.18 \\ \cline{2-2}\cline{4-7}
& error & & $9\cdot 10^{-8}$ & $1\cdot 10^{-8}$ & $1\cdot 10^{-8}$ & $3\cdot 10^{-8}$ \\ \hline
\multirow{2}{*}{\ref{ep:market}}
& time &1.223 & 6.26 & 0.14  & \multirow{2}{*}{Fail}  & 0.50 \\ \cline{2-2}\cline{3-5}\cline{7-7}
& error & $3\cdot 10^{-5}$ & $8\cdot 10^{-6}$ & $3\cdot 10^{-7}$ &  & $7\cdot 10^{-7}$ \\ \hline 
\end{tabular}%
\end{table}

The numerical results are presented in Table~\ref{tab:comparison},
and the comparison is summarized in the following.

\begin{enumerate}

\item The QVI method failed to find a GNE for Example~\ref{ep:fstepinsc6}(i),
because the projection set in Step~2 is empty.
Therefore the line-search could not finish (see \cite[Algorithm~4.1]{Han2012}).
This is also the case for
Examples~\ref{ep:fstepinsc6}(ii) and \ref{ep:hybrid}(ii),
for which the GNEs do not exist.
For Examples~\ref{ep:A8insec6} and \ref{ep:jointballinsc6},
the sequence generated by QVI is alternating
between several points and none of them is a GNE.
For Example~\ref{ep:A3}, the sequence does not converge.

\item The penalty method failed to find a GNE for
Examples~\ref{ep:fstepinsc6}(i) and \ref{ep:num_lmd},
because the equation $F_{\varepsilon_k}(x)=0$
cannot be solved for some $k$ (see \cite[Algorithm~3.3]{FacKan10}).
This is also the case for
Examples~\ref{ep:fstepinsc6}(ii) and \ref{ep:hybrid}(ii),
for which the GNEs do not exist.

\item The interior-point method failed to find a GNE for
Examples~\ref{ep:fstepinsc6}(i), \ref{ep:fstepinsc6}(ii) and \ref{ep:hybrid}(ii),
because the step-length is too small to efficiently decrease the violation of KKT conditions.
Note that for Examples~\ref{ep:fstepinsc6}(ii) and \ref{ep:hybrid}(ii),
the GNEs do not exist, so the Newton type directions usually
do not satisfy the sufficient descent conditions.

\item The Augmented-Lagrangian method failed to find a GNE for
Example~\ref{ep:fstepinsc6}(i),
because the maximum penalty parameter ($10^{12}$) is reached
before a GNE is obtained.
This is also the case for Example~\ref{ep:fstepinsc6}(ii),
for which the GNEs do not exist.
For Examples~\ref{ep:A8insec6}, \ref{ep:jointballinsc6}, \ref{ep:num_lmd},
\ref{ep:hybrid}(ii) and \ref{ep:market},
the Augmented-Lagrangian method failed to find a GNE,
because the penalization subproblems cannot be efficiently solved.

\end{enumerate}

\section{Conclusions and Discussions}
\label{sc:conc}

This paper studies convex GNEPs given by polynomials.
The rational and parametric expressions for Lagrange multipliers are used.
Based on these expressions, Algorithms~\ref{ag:KKTSDP} is proposed for computing a GNE.
The Moment-SOS hierarchy of semidefinite relaxations are used to
solve the appearing polynomial optimization problems.
Under some general assumptions, we show that Algorithm~\ref{ag:KKTSDP}
is able to find a GNE if there exists one,
or detect nonexistence of GNEs if there is none.

For future work, it is interesting to solve nonconvex GNEPPs.
Under some constraint qualifications, the KKT system (\ref{eq:KKTwithLMalp})
is necessary but not sufficient for GNEs.
A solution $u$ of (\ref{eq:KKTwithLMalp}) may not be a GNE for nonconvex GNEPPs.
If $u$ is not a GNE,
one needs to find an efficient method to obtain a different candidate.
Such a method is proposed for solving NEPs \cite{Nie2020nash}.
For GNEPs, it is not clear how to generalize the method in \cite{Nie2020nash}.
When the point $u$ is not a GNE,
how can we exclude it and find a better candidate?
When (\ref{eq:KKTfeasopt}) is feasible,
how do we detect nonexistence of GNEs?
These questions are mostly open, to the best of the authors' knowledge.

\bigskip\noindent
{\bf Acknowledgements}
The authors would like to thank Christian Kanzow and Daniel Steck for sharing
the code for solving GNEPs.
They also thank the editors and anonymous referees for fruitful suggestions.

\end{document}